\newcounter{cntr}
\theoremstyle{plain}\newtheorem{thm}[cntr]{Theorem}
\theoremstyle{definition}\newtheorem{defn}[cntr]{Definition}
\theoremstyle{plain}\newtheorem{propn}[cntr]{Proposition}
\theoremstyle{plain}\newtheorem{cor}[cntr]{Corollary}
\theoremstyle{plain}\newtheorem{lem}[cntr]{Lemma}       
\theoremstyle{plain}
\theoremstyle{definition}
\theoremstyle{remark}\newtheorem{rem}[cntr]{Remark}
\theoremstyle{remark}
\newcommand\rarrow{\xrightarrow{\hspace*{0.7cm}}}
\titleformat{\section}{\normalfont\centering}{\thesection.}{12pt}{}
\title{Locus of restricted tangent bundles of Grassmannian to rational curves of given splitting type}
\author{Sayanta Mandal}
\newcommand\K{\mathbb{K}}
\newcommand\G{G(r,n)}
\renewcommand\P{\mathbb{P}}
\renewcommand\O{\mathcal{O}}
\renewcommand\S{\mathcal{S}}
\begin{document}
\thispagestyle{abcd}

\begin{center}
\begin{Large}
On the loci of morphisms from $\P^1$ to $G(r,n)$ with fixed splitting type of the restricted universal sub-bundle or quotient bundle\\
\end{Large}
$\qquad$\\
\begin{large}
Sayanta Mandal\\
\end{large}
\end{center}

\paragraph{\normalfont\small\textsc{Abstract.}}
{\linespread{0.1}\footnotesize
Let $n\geq 4$, $2 \leq r \leq n-2$ and $e \geq 1$. We show that the intersection of the locus of degree $e$ morphisms from $\P^1$ to $G(r,n)$ with the restricted universal sub-bundles having a given splitting type and the locus of degree $e$ morphisms with the restricted universal quotient-bundle having a given splitting type is non-empty and generically transverse. As a consequence, we get that the locus of degree $e$ morphisms from $\P^1$ to $\G$ with the restricted tangent bundle having a given splitting type need not always be irreducible.
}

\section{\textsc{Introduction}}
Rational curves play a central role in the study of algebraic geometry of projective varieties. Let $X$ be a variety over an algebraically closed field $\K$, and let $C \subset X$ be a rational curve. The two bundles $T_X \vert_C$ and $N_{C/X}$ are especially important in understanding the deformations of $C$ in $X$ and understanding the geometry of the tangent space of smooth rational curves on $X$. These have been studied by Eisenbud and Van de Ven \cite{eis81}, \cite{eis82}, and by Ghione and Sacchiero \cite{ghione}, \cite{sac}, \cite{sac82} who characterized the possible splitting types of the normal bundle of rational curves in $\P^3$ and showed that the locus of rational curves in $\P^3$ with whose normal bundles have a specified splitting type is irreducible of the expected dimension. Ran \cite{ran} determined the splitting type of a generic genus-$0$ curve with one or two components in $\P^n$, as well as the way the bundle deforms locally with a general deformation of the curve.  More recently, Coskun and Riedl \cite{cos18}, \cite{cos19} showed that the locus of nondegenerate rational normal curves in $\mathbb{P}^n$ of fixed degree having a specified splitting type of the normal bundle can be reducible when $n \geq 5$.

In a similar vein, Verdier \cite{verdier} and Ramella \cite{ramella} showed that the locus of nondegenerate rational curves in $\P^n$ with a given splitting type of the restricted tangent bundle is irreducible of expected codimension. Str{\o}mme \cite{strom} examined a nice compactification of this locus as a certain Quot scheme and computed the Chow ring of this compactification. In this paper, we study the locus of degree $e$ morphism from $\P^1$ to the Grassmannian variety with a specified splitting type of the restricted tangent bundle.

Let $\G$ denote the Grassmannian variety of $r$-dimensional subspaces of a $n$-dimensional vector space. The Grassmannian has two special vector bundles, the universal sub-bundle $\mathcal{S}$ and the universal quotient bundle $\mathcal{Q}$ which fit together in an exact sequence 
\[ 0 \rarrow \mathcal{S} \rarrow \mathcal{O}_{\G}^{\oplus n} \rarrow \mathcal{Q} \rarrow 0 \]
Moreover, the tangent bundle $T_{\G}$ is isomorphic to $\mathcal{S}^* \otimes \mathcal{Q}$. We denote by $Mor_e(\P^1_{\K}, G(r,n))$ the scheme parameterizing degree $e$ morphisms from $\P^1_{\K}$ to $G(r,n)$. Let $M(b_\bullet)$ be the locus of morphisms $f$ in $Mor_e(\P^1_{\K}, \G)$ with $f^*(\mathcal{Q})$ having splitting type $b_1, \cdots, b_{n-r}$, and let $M'(a_\bullet)$ be the locus of morphism $f$ with $f^*(\mathcal{S^*})$ having splitting type $a_1, \cdots, a_r$. We first show that \newline
\textbf{Proposition} [Proposition \ref{codimensionlemma}]\textbf{.}\textit{ The loci $M(b_\bullet)$ and $M'(a_\bullet)$ are smooth of the expected codimension.}

This follows as a consequence of a Corollary due to Le Potier \cite{lep}[Corollary 15.4.3]. We then show that \newline
\textbf{Theorem} [Theorem \ref{maintheorem}]\textbf{.}\textit{ Let $n \geq 4$ and $2 \leq r \leq n-2$. The intersection of the loci $M(b_\bullet)$ and $M'(a_\bullet)$ is nonempty and generically transverse.}

The upshot of this theorem is the following: \newline
\textbf{Corollary} [Proposition \ref{proposition22}, Corollary \ref{corollary23}]\textbf{.}\textit{ The locus of morphisms $f$ in $Mor_e(\P^1, \G)$ with $f^*(T_{\G})$ having a specified splitting type has at least one irreducible component of expected codimension arising from a  possible splitting type of $f^*(\mathcal{S})$ and $f^*(\mathcal{Q})$. In particular, this locus need not always be irreducible.}

For example, (as a consequence of Corollary \ref{corollary23} and Lemma \ref{lemma24}) the locus of morphisms in $Mor_e(\P^1_{\K}, G(2,4))$ with restricted tangent bundle having splitting type $c_1,c_2, c_3, c_4$ with $c_1 \leq c_2 < c_3 \leq c_4$ has at least two irreducible components.

This is in sharp contrast with the result of Verdier \cite{verdier} and Ramella \cite{ramella} who have shown that the locus of morphisms $f$ in $Mor_e(\P^1_{\K}, \P^n_{\K})$ with the restricted twisted tangent bundle $f^*(T_{\P^n}(-1))$ having splitting type $a_1, \cdots, a_n$ with $a_1 \geq \cdots \geq a_n \geq 0$ and $a_1 + \cdots + a_n = e$ is a nonempty, smooth, irreducible subvariety.
\paragraph{Organization of the paper.} In section \ref{prelim}, we set up the notation and recall some facts on the restricted universal sub-bundle and restricted universal quotient bundle. In section \ref{nonemptylocussection}, we show that the intersection of the loci $M(b_\bullet)$ and $M'(a_\bullet)$ is nonempty. In section \ref{transverseintersectionsection}, we show that this intersection is generically transverse. In section \ref{section5}, we show that the locus of morphisms $f$ in $Mor_e(\P^1, \G)$ with $f^*(T_{\G})$ having a given splitting type need not always be irreducibe. Additionally, we analyze some special cases and give some examples.
\paragraph{Acknowledgements.} I am extremely grateful to my advisor Prof. Izzet Coskun for invaluable mathematical discussions, correspondences, and several helpful suggestions.
\section{\textsc{Preliminaries}}\label{prelim}
In this section, we set-up the notations and go over some preliminary results. Let $\K$ be an algebraically closed field of characteristic zero. Let $E$ be a vector bundle on $\P^1$ of rank $r$ and degree $e$. By Grothendieck's theorem, there are uniquely determined integers $a_1,\cdots, a_r$ with $a_1 \leq \cdots \leq a_r$ and $a_1 + \cdots + a_r = e$ such that $E$ is isomorphic to $\oplus_{i=1}^r \,\mathcal{O}_{\P^1}(a_i)$. We call this collection of integers the \textit{splitting type} of $E$. We say that $E$ is \textit{balanced} if $a_j - a_i \leq 1$ for all $1 \leq i,j \leq r$.

Let $n \geq 4$ and $2 \leq r \leq n-2$. We denote by $\G$ the Grassmannian variety of $r$-dimensional subspaces of a $n$-dimensional vector space. We denote by $M$ the scheme $Mor_e (\P^1, \G) $ parameterizing degree $e$ morphisms from $\P^1_\K$ to $\G$. We glean the following Lemma \ref{correspondence} from the universal property of Grassmannian
\begin{lem}\label{correspondence}
A degree $e$ morphism $\P^1 \rarrow \G$ corresponds uniquely to a vector bundle $E$ of rank $r$ and degree $e$ together with a surjection $\O^{\oplus n}_{\P^1} \rarrow E$.
\end{lem}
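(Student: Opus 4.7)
The plan is to deduce this from the standard universal property of the Grassmannian $\G$, after passing to duals. By Grothendieck's construction, $\G = G(r,\K^n)$ represents the functor sending a scheme $X$ to the set of rank-$r$ locally free sub-sheaves of $\O_X^{\oplus n}$, or equivalently (via the canonical self-duality $(\O^{\oplus n})^* \cong \O^{\oplus n}$ coming from the chosen basis) to the set of rank-$r$ locally free quotients of $\O_X^{\oplus n}$. Dualizing the tautological sequence $0 \to \S \to \O^{\oplus n} \to \mathcal{Q} \to 0$ on $\G$ exhibits $\S^*$ as a canonical rank-$r$ quotient of the trivial bundle $\O^{\oplus n}$, which is the form of the correspondence the lemma is asserting.

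For the forward direction, given a morphism $f : \P^1 \to \G$, I set $E := f^*\S^*$ and pull back the surjection $\O^{\oplus n} \twoheadrightarrow \S^*$ to obtain the required surjection $\O_{\P^1}^{\oplus n} \twoheadrightarrow E$, with $E$ of rank $r$. For the degree, I use that the ample generator of $\mathrm{Pic}(\G)$ is the Pl\"ucker line bundle $\O_{\G}(1) = \det \S^* = \det \mathcal{Q}$; hence $\deg E = \deg \det E = \deg f^*\det \S^* = \deg f^*\O_{\G}(1) = e$, which is precisely the definition of the degree of $f$.

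For the inverse direction, given a surjection $\O_{\P^1}^{\oplus n} \twoheadrightarrow E$ with $E$ a rank-$r$ vector bundle, dualizing yields a short exact sequence of locally free sheaves $0 \to E^* \to \O_{\P^1}^{\oplus n} \to (\ker)^* \to 0$ that realizes $E^*$ as a rank-$r$ sub-bundle of $\O_{\P^1}^{\oplus n}$. The universal property then supplies a unique morphism $f : \P^1 \to \G$ with $f^*\S = E^*$, and hence $f^*\S^* = E$, recovering the original surjection up to the canonical identification. Functoriality of duality on locally free sheaves on $\P^1$ shows that the two constructions are mutually inverse, yielding the claimed bijection.

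There is no serious obstacle: the lemma is essentially a bookkeeping translation of the moduli description of $\G$ into the ``quotient'' language convenient for later arguments. The only points requiring care are that exactness is preserved under dualization (which holds because every sheaf involved is locally free) and that the identification $\O_{\G}(1) = \det \S^*$ is used to match the degree of the morphism with the degree of $E$.
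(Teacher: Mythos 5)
Your argument is correct, but the inverse direction takes a genuinely different route from the paper. The paper constructs the morphism $\varphi_v$ explicitly: it picks sections $s_1, \dots, s_n$ spanning the image of $H^0(\O_{\P^1}^{\oplus n})$ in $H^0(E)$, maps $\P^1$ into $\P^{\binom{n}{r}}$ via the wedges $s_{i_1} \wedge \cdots \wedge s_{i_r}$, and checks that the image lands in $\G$ because the Pl\"ucker relations hold, with the degree of the map read off from the degree of $E$. You instead invoke the representability of the Grassmannian functor: dualizing the surjection $\O_{\P^1}^{\oplus n} \twoheadrightarrow E$ gives a short exact sequence $0 \to E^* \to \O_{\P^1}^{\oplus n} \to K^* \to 0$ (exact because everything in sight is locally free, so $\mathcal{E}xt^1$ vanishes), exhibiting $E^*$ as a sub-bundle of $\O^{\oplus n}$, and the universal property produces the unique $f$ with $f^*\S = E^*$. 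Your approach is more abstract and immediately supplies the bijectivity via functoriality of the moduli description, while the paper's approach is more elementary and explicit, making the Pl\"ucker map concrete. Both establish the same correspondence; your version is arguably cleaner for the uniqueness claim, whereas the paper's construction is the one used implicitly later when the author writes down explicit matrices $u$ and $v$. The forward direction and the degree computation via $\O_\G(1) = \det\S^*$ match the paper in substance.
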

\begin{proof}
Given a morphism $\varphi : \P^1 \rarrow G(r,n)$, we take $E = \varphi^*(\mathcal{S}^*)$, where $\mathcal{S}$ is the universal sub-bundle, and we clearly have a surjection $v_\varphi : \mathcal{O}_{\P^1}^{\oplus n} \rarrow \varphi^*(\mathcal{S}^*)$. 

Conversely, given a surjection $ v : \mathcal{O}_{\P^1}^{\oplus n} \rarrow E$ where $E$ is a vector bundle of rank $r$ and degree $e$, let $s_1, \cdots, s_n$ form a basis for image of $H^0(\mathcal{O}_{\P^1}^{\oplus n})$ in $H^0(E)$, we have a morphism $\varphi_v : \P^1 \rarrow \P^{\binom{n}{r}}$  with co-ordinates given by $s_{i_1} \wedge \cdots \wedge s_{i_r}$ for $1 \leq i_1 < \cdots < i_r \leq n$, and we see that the image lies in $G(r,n)$ because the co-ordinates satisfy Pl{\"{u}}cker relations, and the resulting map has degree $e$ because $E$ has degree $e$.
\end{proof}

Subsequently, we can think of a morphism from $\P^1$ to $G(r,n)$ as an element of the quot scheme $Quot^{r,e}_{\O^{\oplus n}_{\P^1}/\P^1/\K}$, which is an irreducible, rational, nonsingular, projective variety of dimension $r(n-r) + ne$ \cite[Theorem 2.1]{strom}. In particular, we can think of $M$ as a subscheme of $Quot^{r,e}_{\O^{\oplus n}_{\P^1}/\P^1/\K}$.

\begin{lem}
$M$ is an open subscheme of the quot scheme $Quot^{r,e}_{\O^{\oplus n}_{\P^1}/\P^1/\K}$. Therefore, $M$ is a smooth quasi-projective variety of dimension $r(n-r) + ne$.
\end{lem}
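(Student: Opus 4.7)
The plan is to identify $M$ as the open subscheme of $Q := Quot^{r,e}_{\O^{\oplus n}_{\P^1}/\P^1/\K}$ on which the universal quotient is locally free, and then transfer smoothness and dimension from the theorem of Str\o mme already quoted. First I would invoke Lemma \ref{correspondence}: a closed point of $Q$ is a surjection $\O^{\oplus n}_{\P^1} \rarrow F$ with $F$ coherent of Hilbert polynomial corresponding to rank $r$ and degree $e$, and by the lemma such a point arises from an element of $M$ precisely when $F$ is locally free. So set-theoretically $M$ is the locally free locus in $Q$, and the claim is that this is an open condition.

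The main step is therefore to establish openness of the locally free locus by a standard semicontinuity argument. Let $\mathcal{F}$ be the universal quotient on $\P^1 \times Q$. Because $\P^1$ is a smooth curve, a fiber $\mathcal{F}_q$ is locally free if and only if it is torsion-free, equivalently if and only if the function $x \mapsto \dim_{k(x)} \mathcal{F}(x,q)$ is constantly equal to $r$ on $\P^1$, where $\mathcal{F}(x,q)$ denotes the tensor product of $\mathcal{F}$ with the residue field at $(x,q)$. By upper semicontinuity of fiber dimensions of a coherent sheaf, the locus
\[ Z = \{(x,q) \in \P^1 \times Q : \dim_{k(x,q)} \mathcal{F}(x,q) > r\} \]
is closed in $\P^1 \times Q$. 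Since the projection $\pi_Q : \P^1 \times Q \rarrow Q$ is proper, the image $\pi_Q(Z)$ is closed in $Q$, and its complement is exactly the locally free locus, hence $M$.

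Finally I would conclude by appealing to Str\o mme's theorem cited just above: $Q$ is smooth, irreducible, and projective of dimension $r(n-r) + ne$. An open subscheme of a smooth variety inherits smoothness and dimension, so $M$ is smooth quasi-projective of dimension $r(n-r) + ne$. I do not foresee any serious obstacle; the only point requiring care is to apply the semicontinuity on the product $\P^1 \times Q$ and use properness of $\P^1$ to pass to an open condition purely on $Q$, rather than trying to test locally freeness fiber by fiber over $Q$.
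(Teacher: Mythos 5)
Your semicontinuity argument is correct and establishes openness, but it is a genuinely different route from the paper's. You argue abstractly: on the smooth curve $\P^1$, a coherent sheaf is locally free iff torsion-free iff its fiber dimension is constantly $r$; upper semicontinuity of fiber dimension for the universal quotient on $\P^1 \times Q$ gives a closed jump locus $Z$; and properness of $\P^1 \times Q \rarrow Q$ pushes $Z$ to a closed subset of $Q$ whose complement is $M$. The paper instead constructs, for each $1 \leq i \leq e$, a proper map $Quot^{r,e-i}_{\O^{\oplus n}_{\P^1}/\P^1/\K} \times (\P^1)^i \rarrow Q$ whose image $X_i$ is exactly the locus of quotients whose torsion part has length $i$; each $X_i$ is then closed and irreducible as the image of a proper irreducible variety, the non--locally-free locus is the finite union $\bigcup_{i=1}^e X_i$, and one reads off $\dim X_i \leq r(n-r)+n(e-i)+i < r(n-r)+ne$. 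Your approach is shorter and purely formal, needing only standard semicontinuity and properness; the paper's approach is more constructive, and the dimension bound on each $X_i$ gives as a byproduct that $M$ is nonempty and dense in the irreducible $Q$. In your version you should add a word on why $M$ is nonempty (e.g.\ by exhibiting a single surjection of $\O_{\P^1}^{\oplus n}$ onto a rank-$r$ degree-$e$ bundle) before invoking irreducibility of $Q$ to conclude $\dim M = r(n-r)+ne$; without that the dimension claim would be vacuous. Both proofs ultimately rest on Str\o{}mme's theorem that $Q$ is smooth, irreducible, and projective of this dimension.
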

\begin{proof}
Note that any coherent sheaf $E$ on $\P^1$ has a unique decomposition $E = E' \oplus T$, where $E'$ is locally free and $T$ is torsion. Given any $1 \leq i \leq e$, let $X_i$ be the image of the map 
\[ Quot^{r,e-i}_{\O^{\oplus n}_{\P^1}/\P^1/\K} \times \P^1 \times \cdots_{(i \text{ times })} \cdots \times \P^1 \rarrow Quot^{r,e}_{\O^{\oplus n}_{\P^1}/\P^1/\K}\]
which sends $(E', x_1, \cdots, x_i)$ to $E' \oplus T$ where $T$ is the structure sheaf of the closed subscheme of $\P^1$ defined by $\{ x_1, \cdots, x_i \}$. We see that $X_i$ is closed and irreducible because it is the image of a proper irreducible variety. We have 
\[ dim(X_i) \leq r(n-r) + n(e-i) + i < r(n-r) + ne \]
Since every coherent sheaf $E$ of rank $r$ and degree $e$ which is not locally free lies in some $X_i$, we conclude that $M$ is the complement of the union of the $X_i$'s for $1 \leq i \leq e$.
\end{proof}

We have a canonical map
\[ \Phi : M \times \P^1 \rarrow \G \] which sends a pair $(f,x)$ to $f(x)$. Let $\S$ denote the universal bundle over $\G$.

\begin{lem}\label{claim1}
The family of vector bundles parametrized by $M$ via $\Phi^*(\S^*) \rarrow M \times \P^1$ is a complete family.
\end{lem}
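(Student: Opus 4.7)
My plan is to verify completeness via the Kodaira–Spencer map, i.e.\ to show that for each $f \in M$ the map
\[ \kappa_f : T_f M \rarrow \mathrm{Ext}^1_{\P^1}\bigl(f^*\S^*,\, f^*\S^*\bigr) \]
classifying infinitesimal deformations of the bundle $f^*\S^*$ is surjective. The target is exactly the space of first-order deformations of $f^*\S^*$ as a vector bundle on $\P^1$, so surjectivity of $\kappa_f$ at every point is precisely what is needed for the family $\Phi^*\S^* \to M\times\P^1$ to be complete.

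The key identification is to use the embedding of $M$ as an open subscheme of $\mathrm{Quot}^{r,e}_{\O^{\oplus n}_{\P^1}/\P^1/\K}$. At the point corresponding to $f$, which is the quotient $\O^{\oplus n}_{\P^1} \twoheadrightarrow f^*\S^*$ with kernel $f^*\S$, the tangent space to the Quot scheme is
\[ T_f M \;\cong\; \mathrm{Hom}_{\P^1}\bigl(f^*\S,\, f^*\S^*\bigr), \]
and under this identification $\kappa_f$ is the connecting homomorphism obtained by applying $\mathrm{Hom}(-, f^*\S^*)$ to the short exact sequence
\[ 0 \rarrow f^*\S \rarrow \O^{\oplus n}_{\P^1} \rarrow f^*\S^* \rarrow 0. \]
The relevant portion of the resulting long exact sequence is
\[ \mathrm{Hom}(f^*\S,\, f^*\S^*) \xrightarrow{\kappa_f} \mathrm{Ext}^1(f^*\S^*, f^*\S^*) \rarrow \mathrm{Ext}^1(\O^{\oplus n}_{\P^1},\, f^*\S^*), \]
so to finish it suffices to show that the last group vanishes.

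Now $\mathrm{Ext}^1(\O^{\oplus n}_{\P^1}, f^*\S^*) = H^1(\P^1, f^*\S^*)^{\oplus n}$, and here I use the fact that $f^*\S^*$ is a quotient of the trivial bundle $\O^{\oplus n}_{\P^1}$ and is therefore globally generated. Writing the splitting type as $f^*\S^* \cong \bigoplus_{i=1}^r \O_{\P^1}(a_i)$, global generation forces each $a_i \geq 0$, hence $H^1(\P^1, f^*\S^*) = 0$. The long exact sequence then yields surjectivity of $\kappa_f$, and since this holds at every point of $M$ the family is complete.

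The only real content is the vanishing $H^1(\P^1, f^*\S^*) = 0$, which I do not expect to be an obstacle because it follows at once from global generation on $\P^1$; the rest is the standard deformation-theoretic interpretation of the Quot scheme tangent space and the $\mathrm{Hom}(-,-)$ long exact sequence. The one thing to be slightly careful about is to check that the sign/direction conventions match the intrinsic Kodaira–Spencer map of the family $\Phi^*\S^*$, which one verifies by unwinding the universal property of the Quot scheme over the dual numbers.
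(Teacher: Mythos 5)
Your argument is correct and follows essentially the same route as the paper: identify $T_fM$ with $\mathrm{Hom}(f^*\S, f^*\S^*)$ via the Quot-scheme tangent space, recognize the Kodaira--Spencer map as the connecting map in the $\mathrm{Hom}(-, f^*\S^*)$ long exact sequence, and deduce surjectivity from $\mathrm{Ext}^1(\O^{\oplus n}_{\P^1}, f^*\S^*) = H^1(f^*\S^*)^{\oplus n} = 0$. The paper asserts this vanishing without comment, whereas you justify it via global generation of $f^*\S^*$ (each $a_i \geq 0$), which is a welcome bit of extra explicitness but not a different method.
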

\begin{proof}
Let $E = \O_{\P^1}(a_1) \oplus \cdots \oplus \O_{\P^1}(a_r)$ and $K = \O_{\P^1}(-b_1) \oplus \cdots \oplus \O_{\P^1}(-b_{n-r})$, where $deg(E) = - deg(K) = e$, and consider the exact sequence 
\[ 0 \rarrow K \rarrow \O^{\oplus n}_{\P^1} \rarrow E \rarrow 0 \]
We first observe that if $f$ is the morphism corresponding to $\O_{\P^1}^{\oplus n} \rarrow E$, then $\Phi^*(\S^*)\vert_f = E$. We look at the following commutative diagram
\begin{equation*}
\begin{tikzpicture}[>=angle 90]
	\matrix(a)[matrix of math nodes, row sep=4em, column sep=4.5em, text height=1.5ex, text depth=0.25ex]
	{T_f (M) & Ext^1(\Phi^*(\S^*)\vert_f, \Phi^*(\S^*)\vert_f) \\
	Hom(K,E) & Ext^1(E,E)\\};	

	\path[->] (a-1-1) edge node[above]{} (a-1-2);
	\path[->] (a-1-1) edge node[left]{} (a-2-1);
	\path[->] (a-2-1) edge node[below]{} (a-2-2);
	\path[->] (a-1-2) edge node[right]{} (a-2-2);
\end{tikzpicture}
\end{equation*}
where the vertical maps are isomorphisms, the top horizontal map is the Kodaira-Spencer map, and the bottom horizontal map is obtained by applying $Hom( \bullet , E)$ to the exact sequence 
\[ 0 \rarrow K \rarrow \O_{\P^1}^{\oplus n} \rarrow E \rarrow 0 \]
Since the next term in the long exact sequence is $Ext^1( \O_{\P^1}^{\oplus n} , E) = H^1(E)^{\oplus n} = 0$, the bottom horizontal map is surjective. Hence, the Kodaira-Spencer map is surjective, and so the family is complete.
\end{proof}

We will now use the following corollary due to Le Potier to conclude that the locus of quotient vector bundles in M of given splitting type has expected codimension.

\begin{propn}[\cite{lep}, Cor 15.4.3]\label{lepotier1}
Let X be a smooth projective curve of genus g. Let $E_s$ be a complete family of vector bundles of rank $r$ and degree $d$ parametrized by a smooth variety S. For integers $l, r_i >0 $ and $d_i$, set \[ \mu_i = \frac{d_i}{r_i} \]
The points $s \in S$ such that the Harder-Narasimhan filtration (if it exists) has length $l$ and such that the Harder-Narasimhan grading $gr_i(E_s)$ of $E_s$ has rank $r_i $ and degree $d_i$, for $i = 1, \cdots, l$, form a locally closed smooth subvariety of codimension 
\[ \sum_{i<j} r_i r_j( \mu_i - \mu_j + g - 1) \]
\end{propn}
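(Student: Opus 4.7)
The plan is to exhibit the stratum as the smooth image of an auxiliary flag scheme and to compute its codimension by a tangent-obstruction analysis anchored in the semistability vanishing $\operatorname{Hom}(F,G)=0$ whenever $F,G$ are semistable with $\mu(F)>\mu(G)$.

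First, from the universal family $\mathcal{E}$ on $S\times X$ I would build the relative flag scheme $\tilde S$ over $S$ parametrizing filtrations $0\subset F_1\subset\cdots\subset F_{l-1}\subset \mathcal{E}_s$ with $\operatorname{rk}(F_i/F_{i-1})=r_i$ and $\deg(F_i/F_{i-1})=d_i$; this is constructed as a tower of relative Quot schemes. Inside $\tilde S$, the locus $\tilde S^{\mathrm{HN}}$ on which each successive quotient is semistable and the slopes strictly decrease is locally closed, by openness of semistability in flat families and discreteness of the slope datum. The natural projection $\pi:\tilde S^{\mathrm{HN}}\to S$ has set-theoretic image equal to the desired stratum $S_{r_\bullet,d_\bullet}$, by uniqueness of the Harder-Narasimhan filtration.

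Next, I would show that $\pi$ is a closed immersion onto a smooth subvariety. Each fiber of $\pi$ over a point of the image is a single reduced point (again, uniqueness of the HN filtration). The relative tangent space of $\pi$ at $(s,F_\bullet)$ is computed by a telescoping long-exact-sequence argument applied to $0\to F_{i-1}\to F_i\to gr_i\to 0$, and equals $\bigoplus_{i<j}\operatorname{Hom}(gr_i,gr_j)$, which vanishes because the slopes are strictly decreasing. Hence $\pi$ is unramified. Completeness of the family $\{E_s\}$ combined with the fact that first-order deformations of $E_s$ which lift the filtration are exactly the kernel of an obstruction map $T_s S\to\bigoplus_{i<j}\operatorname{Ext}^1(gr_i,gr_j)$ then yields smoothness of the image and identifies its conormal space with $\bigoplus_{i<j}\operatorname{Ext}^1(gr_i,gr_j)$.

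Finally, Riemann--Roch on $X$ gives
\begin{equation*}
\operatorname{hom}(gr_i,gr_j)-\operatorname{ext}^1(gr_i,gr_j)=\chi(gr_i,gr_j)=r_ir_j(\mu_j-\mu_i+1-g),
\end{equation*}
and since the Hom term vanishes for $i<j$ we read off $\operatorname{ext}^1(gr_i,gr_j)=r_ir_j(\mu_i-\mu_j+g-1)$; summing over $i<j$ recovers the claimed codimension. The main obstacle is the second step --- verifying that $\pi$ really is a closed immersion onto a smooth stratum --- which requires both the semistability vanishing to kill the kernel of the differential and the completeness hypothesis on $\{E_s\}$ to ensure every obstruction class in $\bigoplus_{i<j}\operatorname{Ext}^1(gr_i,gr_j)$ is genuinely realized by a deformation inside $T_s S$, so that the codimension of the stratum in $S$ matches the dimension of this obstruction space.
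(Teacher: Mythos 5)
The paper does not prove this proposition; it is quoted directly from Le Potier's book as a black box (cited as Corollary 15.4.3), so there is no in-paper argument to compare against. Your sketch correctly reproduces the standard Shatz-stratification proof, and the key inputs --- the relative flag scheme over $S$, uniqueness of the Harder--Narasimhan filtration, the vanishing $\operatorname{Hom}(gr_i,gr_j)=0$ for $i<j$ between semistable bundles of strictly decreasing slope, completeness giving surjectivity of the Kodaira--Spencer map, and the Riemann--Roch identity $\chi(gr_i,gr_j)=r_ir_j(\mu_j-\mu_i+1-g)$ --- are all present and correctly deployed.

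The one place you gloss over is the promotion of ``monomorphism with vanishing relative tangent space'' to ``locally closed immersion onto a smooth subvariety of the stated codimension.'' This needs two further facts, both specific to the curve case: first, $\operatorname{Ext}^2=0$ on $X$, so that deformations of the flag are unobstructed, which is what turns the tangent-space identification into an actual smoothness and dimension statement rather than merely a bound; and second, surjectivity of $\operatorname{Ext}^1(E_s,E_s)\to\bigoplus_{i<j}\operatorname{Ext}^1(gr_i,gr_j)$, which comes from the filtration spectral sequence and again uses $\operatorname{Ext}^2=0$, and is needed to conclude that the conormal space is the whole direct sum rather than a proper subspace of it. You invoke completeness, which handles the Kodaira--Spencer half, but the $\operatorname{Ext}^2=0$ input is silently doing the rest of the work; it is worth naming explicitly since it is exactly what fails if one attempts the same argument over a higher-dimensional base. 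None of this invalidates your outline --- it is the correct route and matches the classical argument.
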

Observe that when $g=0$, we have $E_s = \oplus_{i=1}^r \,\mathcal{O}_{\P^1}(a_i)$ for some integers $a_1,\cdots, a_r$, and so, 
\begin{equation}\label{lepotiereqn}
\sum_{i<j} r_i r_j (\mu_i - \mu_j -1) = ext^1(E_s,E_s) = \sum_{i,j} \max \left\lbrace a_i - a_j - 1, 0 \right\rbrace 
\end{equation}

Now we fix two collection of non-negative integers $a_1 \geq \cdots \geq a_r \geq 0$ and $0 \leq b_1 \leq \cdots \leq b_{n-r} $ such that $a_1 + \cdots + a_r = b_1 + \cdots + b_{n-r} = e >0$. Let $M(b_\bullet)$ be the locus of morphisms in $M$ with the restricted universal quotient bundle being isomorphic to $\O_{\P^1}(b_1) \oplus \cdots \oplus \O_{\P^1}(b_{n-r})$, and let $M'(a_\bullet) $ be the locus of morphisms in $M$ with the restricted universal sub-bundle being isomorphic to $\O_{\P^1}(-a_1) \oplus \cdots \oplus \O_{\P^1}(-a_{r})$. Our goal is to show that $M(b_\bullet) \cap M'(a_\bullet)$ is nonempty and generically smooth of expected codimension 
\[\sum_{1 \leq i,j \leq r} \max \left\{ a_i -a_j - 1, 0 \right\} + \sum_{1 \leq i,j \leq n-r} \max \left\{ b_i - b_j - 1, 0 \right\} \]
We see that 
\begin{propn}\label{codimensionlemma}
The locus $M(b_\bullet)$ is smooth of codimension 
\[ \sum_{i,j} \max \left\lbrace b_i - b_j - 1, 0 \right\rbrace \]
Similarly, $M'(a_\bullet)$  is smooth of codimension
\[ \sum_{i,j} \max \left\lbrace a_i - a_j - 1, 0 \right\rbrace \]
\end{propn}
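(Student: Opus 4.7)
The plan is to apply Le Potier's corollary (Proposition~\ref{lepotier1}) to a complete family of vector bundles on $\P^1$ whose fibers realize the relevant restricted bundles, and then read off the codimension using identity~(\ref{lepotiereqn}).

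For $M'(a_\bullet)$, the family $\Phi^*(\S^*)\to M\times\P^1$ is already complete by Lemma~\ref{claim1}, and its fiber at $f\in M'(a_\bullet)$ is $\bigoplus_{i=1}^{r}\O_{\P^1}(a_i)$. Grouping together the summands of equal degree recovers the Harder--Narasimhan filtration, so the graded pieces have fixed ranks $r_i$ and slopes $\mu_i$ on $M'(a_\bullet)$. Proposition~\ref{lepotier1} applied with $g=0$ then says that $M'(a_\bullet)$ is locally closed and smooth of codimension $\sum_{i<j} r_i r_j(\mu_i - \mu_j - 1)$; by (\ref{lepotiereqn}) this equals $\sum_{i,j}\max\{a_i - a_j - 1, 0\}$.

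For $M(b_\bullet)$ I would run the same argument with the family $\Phi^*(\mathcal{Q})\to M\times\P^1$ of rank $n-r$, degree $e$ bundles in place of $\Phi^*(\S^*)$. The only new input is completeness of this family, which I would establish by a Kodaira--Spencer calculation parallel to Lemma~\ref{claim1}: applying $Hom(\cdot,f^*(\mathcal{Q}))$ to $0\to f^*(\S)\to \O^{\oplus n}_{\P^1}\to f^*(\mathcal{Q})\to 0$ yields
\[ Hom(f^*(\S), f^*(\mathcal{Q})) \rarrow Ext^1(f^*(\mathcal{Q}), f^*(\mathcal{Q})) \rarrow Ext^1(\O^{\oplus n}_{\P^1}, f^*(\mathcal{Q})). \]
Since $\mathcal{Q}$ is globally generated on $\G$, so is $f^*(\mathcal{Q})$, and hence every summand of $f^*(\mathcal{Q})$ has non-negative degree, so the rightmost term $H^1(f^*(\mathcal{Q}))^{\oplus n}$ vanishes. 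Thus the displayed map is surjective; identifying its source with the tangent space $T_f M$ shows that it coincides with the Kodaira--Spencer map for $\Phi^*(\mathcal{Q})$, proving completeness. A shorter alternative is to invoke the isomorphism $\G\cong G(n-r,n)$ sending a subspace to its annihilator, which interchanges $\S$ and $\mathcal{Q}^*$ and so reduces the $M(b_\bullet)$ statement directly to the $M'$ statement.

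The main obstacle I anticipate is the bookkeeping needed to identify the connecting map in the long exact $Ext$ sequence above with the Kodaira--Spencer map of the $\Phi^*(\mathcal{Q})$ family, which is a standard but slightly tedious deformation-theory check. Once that is in place, the codimension formulas follow immediately from Proposition~\ref{lepotier1} and the identity~(\ref{lepotiereqn}).
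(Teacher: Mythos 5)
Your proposal is correct and matches the paper's argument: the paper applies Le Potier's corollary (Proposition~\ref{lepotier1}) to the complete family $\Phi^*(\S^*)$ from Lemma~\ref{claim1} for one of the two loci, then transports the result to the other via the canonical isomorphism $Mor_e(\P^1,G(r,n))\cong Mor_e(\P^1,G(n-r,n))$ that swaps the restricted sub-bundle and quotient bundle --- exactly the ``shorter alternative'' you describe. Your parallel Kodaira--Spencer computation establishing completeness of $\Phi^*(\mathcal{Q})$ directly is also valid and amounts to the same proof restated without invoking the duality.
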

\begin{proof}
The first part of the Lemma follows from Lemma \ref{claim1}, Proposition \ref{lepotier1}, and equation \ref{lepotiereqn}.

To conclude the second part, we note that the canonical map 
\[ Quot^r_{\O_{\P^1}^{\oplus n}/\P^1/\K} \rarrow Quot^{n-r}_{\O_{\P^1}^{\oplus n}/\P^1/\K} \]
which sends $[\O_{\P^1}^{\oplus n} \rarrow E]$ to $[\O_{\P^1}^{\oplus n} \rarrow \mathcal{K}^*]$, where $\mathcal{K}$ is the kernel of the map $\O_{\P^1}^{\oplus n} \rarrow E $, induces an isomorphism between $Mor_{e}(\P^1, G(r,n))$ and $Mor_e(\P^1, G(n-r,n))$. Hence, the second part of the Lemma follows from the first part.
\end{proof}

Therefore, we need to show that the intersection of $M(b_\bullet)$ and $M'(a_\bullet)$ is nonempty, and we need to find a point in $M(b_\bullet) \cap M'(a_\bullet)$ where the intersection is transverse. We show these in section \ref{nonemptylocussection} and \ref{transverseintersectionsection}.

\begin{defn}\label{polygon}
Given a collection of non-negative integers $a_1, \cdots, a_l$, we define its \textit{polygonal line} to be 
\[ \mathfrak{P}(a_1, \cdots, a_l) = (a'_1, a'_1 + a'_2, \cdots, a'_1 + \cdots + a'_l) \]
where $a'_1, \cdots , a'_l$ is a rearrangement of the $a_i$'s such that $a'_1 \geq \cdots \geq a'_l$. Additionally, given another such collection $b_1, \cdots, b_l$ with rearrangement $b'_1 \geq \cdots \geq b'_l$, we define inequality 
\[ \mathfrak{P}(b_\bullet) \geq \mathfrak{P}(a_\bullet) \quad \text{if} \quad \sum_{j=1}^i b'_j \geq \sum_{j=1}^i a'_j, \quad \text{for all } 1 \leq i \leq l \]
\end{defn}

It follows as a consequence of Proposition 1.2 due to Ramella \cite{ramella}
\begin{propn}\label{mclosure}Given two collection of non-negative integers $0 \leq b_1 \leq \cdots \leq b_{n-r}$ and $0 \leq b'_1 \leq \cdots \leq b'_{n-r}$ with $b_1 + \cdots + b_{n-r} = b'_1 + \cdots + b'_{n-r} = e$. We have 
\[ \overline{M(b'_\bullet)} \supset M(b_\bullet) \quad \text{iff} \quad \mathfrak{P}(b'_\bullet) \leq \mathfrak{P}(b_\bullet) \]

Similar result holds for $M'(a_\bullet)$.
\end{propn}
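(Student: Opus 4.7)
The strategy is to adapt Ramella's Proposition 1.2 from the $\P^n$ setting to the Grassmannian setting, following the same two-pronged argument: upper semi-continuity of the splitting-type polygon for the forward direction, and an explicit one-parameter deformation realizing each elementary move in the dominance order for the backward direction.

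For the forward implication $\overline{M(b'_\bullet)}\supset M(b_\bullet)\Longrightarrow \mathfrak{P}(b'_\bullet)\leq \mathfrak{P}(b_\bullet)$, I would apply Shatz's upper semi-continuity theorem to the flat family $\Phi^*\mathcal{Q}$ on $M\times \P^1$. On $\P^1$ the Harder--Narasimhan polygon of a vector bundle coincides with the polygonal line of its splitting type, so $f\mapsto \mathfrak{P}(b_\bullet(f))$ is upper semi-continuous on $M$; this forces the desired polygon inequality whenever one stratum is contained in the closure of another.

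For the reverse implication, I would induct on the dominance order and reduce to a single elementary move, in which $b'_\bullet$ is obtained from $b_\bullet$ by replacing a pair $(b_i,b_j)$ with $b_i<b_j$ and $b_j\geq b_i+2$ by $(b_i+1,b_j-1)$. Given $f_0\in M(b_\bullet)$ corresponding to $0\to K_0\to \O^{\oplus n}_{\P^1}\to E_0\to 0$ with $K_0=\bigoplus_k \O_{\P^1}(-b_k)$, I would construct a one-parameter family $K_t\hookrightarrow \O^{\oplus n}_{\P^1}$ that keeps the summands for $k\neq i,j$ fixed while replacing $\O_{\P^1}(-b_i)\oplus \O_{\P^1}(-b_j)$ by a family of rank-$2$ extensions $0\to \O_{\P^1}(-b_j)\to \mathcal{F}_t\to \O_{\P^1}(-b_i)\to 0$ with extension class $t\xi$ for a generic $\xi\in H^1(\O_{\P^1}(b_i-b_j))$. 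Since $b_j\geq b_i+2$, this group is non-zero, and a generic non-trivial extension yields $\mathcal{F}_t\cong \O_{\P^1}(-b_j+1)\oplus \O_{\P^1}(-b_i-1)$, producing a $K_t$ of the desired splitting type for $t\neq 0$ while $K_0$ matches the original kernel.

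The main obstacle is ensuring that the family of rank-$2$ subbundles $\mathcal{F}_t\subset \O^{\oplus n}_{\P^1}$ can be embedded compatibly with the original inclusion $\mathcal{F}_0\subset \O^{\oplus n}_{\P^1}$ so that $K_t$ remains a genuine subbundle and the quotient $E_t$ stays locally free of rank $r$ throughout the family. The hypotheses $n\geq 4$ and $2\leq r\leq n-2$ leave enough room inside $\O^{\oplus n}_{\P^1}$ to carry out this deformation, and the construction is Ramella's, transported essentially verbatim from her Quot scheme to ours.
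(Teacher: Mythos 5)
The paper does not actually prove this proposition; it is asserted to follow from Ramella's Proposition 1.2, and the reader is simply referred to that paper for the argument. Your reconstruction is the standard two-pronged approach (Shatz semicontinuity of the Harder--Narasimhan polygon for necessity, elementary-move deformations for sufficiency) and is plausibly what Ramella's proof does, transported from the rank-one kernels in the $\P^n$ case to the rank-$(n-r)$ kernels here. The forward direction as you state it is correct: the restriction of $\Phi^*\mathcal{Q}$ to the fiber over $f$ is $f^*\mathcal{Q}$, and on $\P^1$ the HN polygon coincides with the polygonal line $\mathfrak{P}$, so semicontinuity gives the implication.

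Two issues in the backward direction. First, the claim that a \emph{generic} non-trivial extension of $\O_{\P^1}(-b_i)$ by $\O_{\P^1}(-b_j)$ yields $\O_{\P^1}(-b_j+1)\oplus\O_{\P^1}(-b_i-1)$ is false when $b_j - b_i \geq 3$: a generic extension class gives the most balanced splitting, which is several elementary moves away, not one. One must instead select a specific class $\xi$ lying in the (non-empty) stratum of $Ext^1(\O_{\P^1}(-b_i),\O_{\P^1}(-b_j))$ whose extension has precisely the one-step-intermediate splitting; since the isomorphism class of the extension depends only on the line $[\xi]$, the family indexed by $t\xi$ then stays in the desired stratum for all $t\neq0$ and degenerates to the split bundle at $t=0$, so the argument survives with that replacement. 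Second, you correctly flag the essential step --- lifting the abstract family $\mathcal{F}_t$ to a family of sub-bundles $K_t\hookrightarrow\O_{\P^1}^{\oplus n}$ with locally free rank-$r$ cokernel so that the whole one-parameter family lives inside $M$ --- but you do not carry it out. That lifting is where almost all of the content of the proof lies (it is the reason one works in the Quot scheme rather than with abstract bundles), and leaving it as "the main obstacle" means the sketch stops short of a proof.
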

Since $M$ is stratified by $M(b_\bullet)$ for all possible $0 \leq b_1 \leq \cdots \leq b_{n-r}$ with $b_1 + \cdots + b_{n-r} = e$, and by $M'(a_\bullet)$ for all possible $a_1 \geq \cdots \geq a_r \geq 0$ with $a_1 + \cdots + a_r = e$, Proposition \ref{mclosure} yields the following Corollary.
\begin{cor}\label{mclosure2}
The closure of the locus $M(b_\bullet)$ in $Mor_e(\mathbb{P}^1, G(r,n))$ is 
\[ \overline{M(b_\bullet)} = \bigcup_{\substack{0 \leq b'_1 \leq \cdots \leq b'_{n-r} \\ b_1 + \cdots + b_{n-r} = e \\ \mathfrak{P}(b'_\bullet) \geq \mathfrak{P}(b_\bullet)}} M(b'_\bullet) \]
Similarly, we have 
\[ \overline{M'(a_\bullet)} = \bigcup_{\substack{a'_1 \geq \cdots \geq a'_r \geq 0 \\ a'_1 + \cdots + a'_r = e \\ \mathfrak{P}(a'_\bullet) \geq \mathfrak{P}(a_\bullet)}} M'(a'_\bullet) \]
\end{cor}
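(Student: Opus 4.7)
The plan is to deduce Corollary \ref{mclosure2} from Proposition \ref{mclosure} together with the stratification of $M = Mor_e(\P^1, \G)$ by splitting type. Every morphism $f \in M$ has a well-defined splitting type for $f^*(\mathcal{Q})$, so $M$ is the disjoint union of the finitely many locally closed strata $M(b_\bullet)$, and similarly for the $M'(a_\bullet)$. I would establish the two inclusions $\subseteq$ and $\supseteq$ separately.

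For the inclusion $\supseteq$, I would fix any $b'_\bullet$ with $\mathfrak{P}(b'_\bullet) \geq \mathfrak{P}(b_\bullet)$ and apply Proposition \ref{mclosure} after swapping the labels of the two splitting types: the hypothesis $\mathfrak{P}(b_\bullet) \leq \mathfrak{P}(b'_\bullet)$ immediately gives $M(b'_\bullet) \subseteq \overline{M(b_\bullet)}$. Taking the union over all admissible $b'_\bullet$ yields the desired inclusion.

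For the inclusion $\subseteq$, write $C(b_\bullet)$ for the union on the right-hand side. It obviously contains $M(b_\bullet)$ (take $b'_\bullet = b_\bullet$), so it suffices to show that $C(b_\bullet)$ is closed in $M$. I would argue this by upper semi-continuity of the splitting polygon in flat families of coherent sheaves on $\P^1$: applied to the universal family $\Phi^*(\mathcal{Q})$ over $M \times \P^1$, the function $f \mapsto \mathfrak{P}(f^*(\mathcal{Q}))$ is upper semi-continuous with respect to the partial order of Definition \ref{polygon}, since each of the numbers $h^0(f^*(\mathcal{Q})(-k))$ is upper semi-continuous in $f$. Hence the preimage $\{f \in M : \mathfrak{P}(f^*(\mathcal{Q})) \geq \mathfrak{P}(b_\bullet)\}$ is closed, and this preimage is precisely $C(b_\bullet)$.

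The second equality, concerning $M'(a_\bullet)$, follows verbatim using $f^*(\mathcal{S}^*)$ and the second half of Proposition \ref{mclosure}. I do not expect any real obstacle here: the substantive input is Ramella's Proposition \ref{mclosure}, and the present corollary is essentially bookkeeping, with the only additional ingredient being the classical upper semi-continuity of $h^0$ in flat families (which, combined with the frontier-type input from Proposition \ref{mclosure}, pins down $\overline{M(b_\bullet)}$ as a union of strata and determines exactly which ones).
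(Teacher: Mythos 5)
Your proof is correct and follows the same route the paper intends: the inclusion $\supseteq$ is a direct application of Proposition \ref{mclosure} with the roles of $b_\bullet$ and $b'_\bullet$ interchanged, while the inclusion $\subseteq$ uses the stratification plus upper semi-continuity of the splitting polygon (equivalently, of $h^0$ of twists) to see that the right-hand side is closed. The paper compresses all of this into the single remark that the stratification together with Proposition \ref{mclosure} yields the corollary; you have simply made explicit the semi-continuity input that the paper leaves implicit.
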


\section{\textsc{The intersection locus is nonempty}}\label{nonemptylocussection}
In this section we show that the intersection of the locus of degree $e$ morphisms from $\P^1$ to $G(r,n)$ with the restricted universal sub-bundle having given splitting type and the locus of degree $e$ morphisms with restricted universal quotient bundle having given splitting type is non-empty. In particular, we want to show that given two sequences of non-negative integers $a_1 \geq \cdots \geq a_r \geq 0$ and $0 \leq b_1 \leq \cdots \leq b_{n-r} $ such that $a_1 + \cdots + a_r = b_1 + \cdots + b_{n-r} = e >0$, there exits an exact sequence of vector bundles 
\[ 0 \rarrow \O_{\P^1}(-b_1) \oplus \cdots \oplus \O_{\P^1}(-b_{n-r}) \stackrel{u}{\rarrow} \O_{\P^1}^{\oplus n} \stackrel{v}{\rarrow} \O_{\P^1}(a_1) \oplus \cdots \oplus \O_{\P^1}(a_r) \rarrow 0 \]
By dualizing the sequence if necessary, we may assume without loss of generality that $(n-r) \leq r$.



Before doing the general case, we would like to do the case $r = n-r = 2$. We have $a_1 \geq a_2$, $b_1 \leq b_2$ and $a_1 + a_2 = b_1 + b_2 = e$. 
\begin{propn}\label{baby}
There exists an exact sequence 
\[ 0 \rarrow \O(-b_1) \oplus \O(-b_2) \stackrel{u}{\rarrow} \O^{\oplus 4} \stackrel{v}{\rarrow} \O(a_1) \oplus \O(a_2) \rarrow 0 \]
\end{propn}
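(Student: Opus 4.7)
My plan is to produce the required exact sequence by exhibiting an explicit surjection $v : \mathcal{O}_{\mathbb{P}^1}^{\oplus 4} \to \mathcal{O}(a_1)\oplus\mathcal{O}(a_2)$ given as a $2\times 4$ matrix of monomials in $s,t$, and then computing the splitting type of its kernel. First, the duality map $Mor_e(\mathbb{P}^1, G(r,n)) \cong Mor_e(\mathbb{P}^1, G(n-r,n))$ that interchanges the universal sub-bundle and quotient bundle (used in the proof of Proposition~\ref{codimensionlemma}) also swaps the roles of $(a_\bullet)$ and $(b_\bullet)$, so I may dualize the sought sequence and hence assume without loss of generality that $a_1 \geq b_2$ (equivalently $a_2 \leq b_1$); in particular $(b_1,b_2)$ is no less balanced than $(a_1,a_2)$.

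Under that assumption I consider
\[
v \;=\; \begin{pmatrix} s^{a_1} & t^{a_1} & 0 & s^{p}\,t^{a_1 - p} \\ 0 & 0 & s^{a_2} & t^{a_2} \end{pmatrix},
\]
for a parameter $p\in\{0,1,\dots,a_1\}$ to be tuned in terms of $(b_1,b_2)$. Surjectivity of $v$ is immediate from two $2\times 2$ minors which together cover $\mathbb{P}^1$. Eliminating $x_3,x_4$ from the kernel equations via the second row identifies $K := \ker v$ with the kernel of a reduced map $\mathcal{O}^{\oplus 2}\oplus\mathcal{O}(-a_2)\to\mathcal{O}(a_1)$, and writing its transition matrix between the standard affine charts $U_s, U_t$ of $\mathbb{P}^1$ produces an upper-triangular form
\[
T \;=\; \begin{pmatrix} -v^{-a_1} & v^{p - a_1} \\ 0 & v^{-a_2} \end{pmatrix},
\]
exhibiting $K$ as an extension $0 \to \mathcal{O}(-a_1) \to K \to \mathcal{O}(-a_2) \to 0$ whose class in $\mathrm{Ext}^1(\mathcal{O}(-a_2),\mathcal{O}(-a_1)) = H^1(\mathcal{O}(a_2-a_1))$ is encoded in the off-diagonal entry $v^{p-a_1}$.

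The splitting type of $K$ is then read off the extension class, which I extract via a Birkhoff-style reduction of $T$ using row operations over $\mathbb{K}[v]$ together with column operations over $\mathbb{K}[u]$. As $p$ ranges over $\{0,1,\dots,a_1\}$ the off-diagonal entry $v^{p - a_1}$ sweeps out distinct strata of $H^1(\mathcal{O}(a_2 - a_1))$, and I expect that every admissible pair $(b_1,b_2)$ with $b_2\le a_1$ is hit for some $p$. The main obstacle is verifying precisely this correspondence between $p$ and $(b_1,b_2)$; I plan to address it by computing $h^0(K(m))$ directly from $T$ for appropriate twists $m$ and comparing with $h^0(\mathcal{O}(m-b_1))+h^0(\mathcal{O}(m-b_2))$ (these dimensions characterize the splitting type uniquely by Grothendieck's theorem), matching $p$ to $(b_1,b_2)$ via the depth of the class in the Harder--Narasimhan filtration of $H^1(\mathcal{O}(a_2 - a_1))$. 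The remaining range $b_2>a_1$ is then handled by dualizing the constructed sequence.
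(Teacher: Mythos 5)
Your strategy differs genuinely from the paper's: the paper writes down \emph{both} $u$ and $v$ as explicit monomial matrices and then simply checks the three conditions ($v$ surjective via two $2\times 2$ minors, $u$ injective via two $2\times 2$ minors, $v\circ u=0$ by direct multiplication), whereas you write down only $v$ (a different matrix, with the second row supported in the last two columns rather than the middle two) and then \emph{compute} the splitting type of $\ker v$ via a Birkhoff-style reduction of its transition matrix. What the paper's route buys is that it is entirely elementary and, crucially, it scales: the authors reuse the same recipe (explicit $u$ and $v$) for the general $G(r,n)$ case in the next section. What your route buys is that one free parameter $p$ simultaneously produces every kernel splitting type, so the nonemptiness of all strata $M(b_\bullet)\cap M'(a_\bullet)$ (for fixed $a_\bullet$, in the range $b_2\le a_1$) drops out of a single family. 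Also note a small difference in the normalization: the paper's WLOG is $a_1\ge b_1$, which is \emph{forced} (no dualization needed); yours is $a_1\ge b_2$, which is genuinely a WLOG requiring the dualization $G(r,n)\cong G(n-r,n)$, but it is the right reduction for your construction since $p=a_1-b_2\ge 0$ is what makes the parameter land in the admissible range.

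The proposal is, however, not yet a proof: the step you flag as ``the main obstacle'' is exactly the content that must be supplied. It can be supplied, and cleanly, but it is not automatic from the extension-class language alone (knowing the class in $H^1(\mathcal{O}(a_2-a_1))$ pins down when the extension splits, not what the splitting type is when it does not). To close the gap, carry out the reduction concretely: with transition matrix $T=\bigl(\begin{smallmatrix}v^{-a_1}&v^{m}\\0&v^{-a_2}\end{smallmatrix}\bigr)$, $m=p-a_1$, multiply on the right by $\bigl(\begin{smallmatrix}1&0\\-u^{a_1+m}&1\end{smallmatrix}\bigr)$ (legal since $a_1+m=p\ge0$, so this is polynomial in $u=1/v$), then swap columns, then clear the new off-diagonal on the left by $\bigl(\begin{smallmatrix}1&0\\-v^{-a_2-m}&1\end{smallmatrix}\bigr)$ (legal when $m\le-a_2$, i.e.\ $p\le a_1-a_2$). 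This yields $K\cong\mathcal{O}(p-a_1)\oplus\mathcal{O}(-a_2-p)$, so taking $p=a_1-b_2$ (which lies in $[0,a_1-a_2]$ precisely because $a_2\le b_1\le b_2\le a_1$) gives $K\cong\mathcal{O}(-b_2)\oplus\mathcal{O}(-b_1)$ as required. With that computation in place your argument is complete. One cosmetic point: you use $v$ both for the matrix of the surjection and for the affine coordinate $s/t$; rename one of them.
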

\begin{proof} Note that we must have $a_1 \geq b_1$, otherwise
\[b_1 + b_2 \geq 2 b_1 >2a_1 \geq a_1 + a_2\]
which is a contradiction.
We define 
\[ 
v = \begin{pmatrix}
x^{a_1} & y^{a_1} & 0 & x^{a_1 - b_1}y^{b_1}\\
0 & x^{a_2} & y^{a_2} & 0
\end{pmatrix} \qquad \qquad
u = \begin{pmatrix}
-y^{b_1} & 0 \\
0 & x^{a_1 - b_1}y^{a_2} \\
0 & -x^{b_2}\\
x^{b_1} & -y^{b_2}
\end{pmatrix}
\]
where $x$ and $y$ denote the co-ordinate functions of $\P^1$. The minor corresponding to the first two columns of $v$ is $x^{a_1 + a_2}$ and the minor corresponding to the second and third column of $v$ is $y^{a_1 + a_2}$. Since these two monomials do not vanish simultaneously on $\P^1$, we conclude that $v$ is surjective. 

Similarly, by looking at the minor corresponding to first and fourth row, and the minor corresponding to third and fourth row, we conclude that $u$ is injective. 

Finally, one can check that $v \circ u = 0$.
\end{proof}

Now we discuss the general case when $(n-r) \leq r$. We define 
\begin{defn}\label{definition7}
\[ A(j) = \begin{cases} 0, \qquad \qquad \qquad \quad \text{ if } j \leq 0 \\
a_1 + \cdots + a_j, \qquad \text{ if } 1 \leq j \leq r \\
a_1 + \cdots + a_r, \qquad \text{ if } j \geq r
\end{cases}
B(i) = \begin{cases} 0, \qquad \qquad \qquad \quad \text{ if } i \leq 0 \\
b_1 + \cdots + b_i, \qquad \text{ if } 1 \leq i \leq n-r \\
b_1 + \cdots + b_{n-r}, \quad \text{ if } i \geq n-r
\end{cases}
\]
\end{defn}
To describe the matrices, we need to use the following lemma.
\begin{lem}\label{order}
Let $a_1 \geq \cdots \geq a_r \geq 0$ and $0 \leq b_1 \leq \cdots \leq b_{n-r}$ be two sequence of non-negative integers with $(n-r) \leq r$ and $ A(r) = B(n-r)$. Then for all $0 \leq l \leq (n-r)$, we have $A(2r -n + l) \geq B(l)$.
\end{lem}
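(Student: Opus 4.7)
The plan is to reduce the inequality to an averaging comparison exploiting the monotonicity of the two sequences. Set $k = 2r-n$ and $m = n-r$, so $k + m = r$ and the hypothesis $n-r \leq r$ gives $k \geq 0$; the assertion becomes $A(k+l) \geq B(l)$ for all $0 \leq l \leq m$. First I would dispose of the boundary cases: when $l=0$ the inequality reads $A(k) \geq 0$, which is immediate since each $a_i$ is non-negative, and when $l = m$ it is precisely the hypothesis $A(r) = B(m)$.

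For $0 < l < m$, I would use the monotonicity directly. Since $a_1 \geq \cdots \geq a_r$, the average of the first $k+l$ terms dominates the overall average, so
\[ \frac{A(k+l)}{k+l} \;\geq\; \frac{A(r)}{r}, \qquad \text{hence}\qquad A(k+l) \;\geq\; \frac{k+l}{r}\, A(r). \]
Dually, since $0 \leq b_1 \leq \cdots \leq b_m$, the average of the first $l$ terms is at most the overall average, so $B(l) \leq \tfrac{l}{m}\, B(m)$. Using the hypothesis $A(r) = B(m)$, the difference satisfies
\[ A(k+l) - B(l) \;\geq\; A(r)\left( \frac{k+l}{r} - \frac{l}{m} \right) \;=\; \frac{A(r)}{rm}\, \bigl( (k+l)m - lr \bigr) \;=\; \frac{A(r)\, k(m-l)}{rm}, \]
where the last equality uses $r = k+m$. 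The right-hand side is non-negative since $A(r) \geq 0$, $k \geq 0$, and $m-l \geq 0$.

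I do not anticipate a serious obstacle here; the only point to be careful about is the direction of each averaging inequality and the algebraic identity $(k+l)m - lr = k(m-l)$. The hypothesis $m \leq r$ enters as $k \geq 0$, and the range $0 \leq l \leq m$ is exactly what yields $m - l \geq 0$, so both assumptions of the lemma appear essentially in the final sign check.
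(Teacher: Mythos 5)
Your proof is correct, and the approach is genuinely different from the paper's. The paper argues by contradiction on the quantity $s(l) = A(2r-n+l) - B(l)$: it takes the first index $l_0$ where $s$ goes negative, extracts $a_{2r-n+l_0} < b_{l_0}$ from $s(l_0) - s(l_0-1) < 0$, and then propagates this single term-by-term comparison forward using the monotonicity of both sequences to conclude $s(n-r) < 0$, contradicting the hypothesis $A(r) = B(n-r)$. That is a discrete, pointwise argument: it never compares partial sums to one another directly, only consecutive increments. You instead use the standard majorization fact that the running averages $A(j)/j$ of a non-increasing sequence are non-increasing and the running averages $B(i)/i$ of a non-decreasing sequence are non-decreasing, then close with the algebraic identity $(k+l)m - lr = k(m-l)$ where $k = 2r-n$, $m = n-r$. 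Both routes are short, but yours is direct (no contradiction), isolates exactly where each hypothesis is used ($k \geq 0$ from $n-r \leq r$, and $m-l \geq 0$ from the range of $l$), and has the minor bonus of showing the inequality is strict whenever $0 < l < m$, $k > 0$, and $A(r) > 0$. One small caveat worth stating explicitly: you divide by $k+l$, which is fine because $k \geq 0$ and you have already set aside $l = 0$, so $k + l \geq 1$ throughout the case $0 < l < m$; likewise $r \geq 2$ and $m \geq 2$ under the paper's standing assumptions, so $rm > 0$.
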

\begin{proof}
Let $s(l) = A(2r -n +l) - B(l) $ for any $0 \leq l \leq (n-r)$. Clearly, $s(0) \geq 0$. 
Let $1 \leq l_0 < n-r$ be the least integer such that $s(l_0-1) \geq 0$ and $s(l_0)<0$. 

Since $s(l_0) = s(l_0-1) + a_{2r -n +l_0} - b_{l_0}$, we must have $a_{2r -n +l_0} - b_{l_0} <0$. This in turn implies that 
\[ a_r \leq \cdots \leq a_{2r -n +l_0+1} \leq a_{2r -n + l_0}  < b_{l_0} \leq b_{l_0+1} \leq \cdots \leq b_{n-r} \]
which gives 
\[s(n-r)  = s(l_0) + (a_{2r -n + l_0 +1} - b_{l_0 + 1}) + \cdots + (a_r - b_{n-r})\leq  s(l_0) < 0\]
But we know $s(n-r) = 0$, thus we have a contradiction.
\end{proof}

The description of the matrices depend on how the $A(j)$'s and $B(i)$'s are ordered. For example, let $r = n-r = 5$ and let's assume the following order 
\[ B(1) < B(2) < A(1) < A(2) < B(3) < A(3) < B(4) < A(4) < B(5) = A(5) \]
For ease of notation, let us define $s_{j,i} = A(j) - B(i)$ for any given integers $i,j$. Let $x$ and $y$ denote the co-ordinate functions of $\P^1$. Then the first matrix $v$ is given as follows : 
{\footnotesize
\begin{equation}
\begin{pmatrix}
x^{a_1} & y^{a_1} & 0 & 0 & 0 & 0 & x^{s_{1,1}}y^{-s_{0,1}} & x^{s_{1,2}}y^{-s_{0,2}} & 0 & 0 \\
0 & x^{a_2} & y^{a_2} & 0 & 0 & 0 & 0 & 0 & 0 & 0 \\
0 & 0 & x^{a_3} & y^{a_3} & 0 & 0 & 0 & 0 & x^{s_{3,3}}y^{-s_{2,3}} & 0\\
0 & 0 & 0 & x^{a_4} & y^{a_4} & 0 & 0 & 0 & 0 & x^{s_{4,4}}y^{-s_{3,4}} \\
0 & 0 & 0 & 0 & x^{a_5} & y^{a_5} & 0 & 0 & 0 & 0\\
\end{pmatrix}
\end{equation}
}
The second matrix $u$ is given as follows : 
\begin{equation}
\begin{pmatrix}
-y^{b_1} & 0 & 0 & 0 & 0\\
0 & 0 & x^{s_{1,2}}y^{-s_{1,3}} & 0 &0 \\
0 & 0 & -x^{s_{2,2}}y^{-s_{2,3}} & 0 & 0 \\
0 & 0 & 0 & x^{s_{3,3}}y^{-s_{3,4}} & 0 \\
0 & 0 & 0 & 0 & x^{s_{4,4}}y^{-s_{4,5}}\\
0 & 0 & 0 & 0 & -x^{b_5}\\
x^{b_1} & -y^{b_2} & 0 & 0 & 0 \\
0 & x^{b_2} & -y^{b_3} & 0 & 0\\
0 & 0 & x^{b_3} & - y^{b_4} & 0\\
0 & 0 & 0 & -x^{b_4} & -y^{b_5}\\
\end{pmatrix}
\end{equation}
It is easy to see that the $v$ is surjective, $u$ is injective, and $v \circ u = 0$. 

We now proceed to define the matrices $v$ and $u$ in general. We define two increasing sequences of non-negative integers $ \lbrace i_l \rbrace_{l \geq 0} $ and $ \lbrace j_l \rbrace_{l\geq 0} $ recursively in the following manner:

We define $i_0 = 0$, and $j_0$ to be the largest non-negative integer such that $j_0 \leq r$ and  $A(j_0) \leq B(1)$. For each $ l \geq 1$, we define $i_l$ to be the largest non-negative integer such that $i_l \leq n-r$ and $B(i_l) \leq A(j_{l-1}+1)$ and $j_l$ to be the largest non-negative integer such that $j_l \leq r$ and $A(j_l) \leq B(i_l + 1)$. It follows that for $l \gg 0$, we have $j_l = r$ and $i_l = n-r$. We define $\alpha$ to be the least positive integer such that $j_{\alpha+1} = r$. It follows from Lemma \autoref{order} that in general, there are two possible orderings:

if $a_1 > b_1$, we see that $i_0 = j_0 = 0$ and we have:
\begin{align*}
& B(1) \leq \cdots \leq B(i_1) \leq A(1) \leq \cdots \leq A(j_1) \leq \\
& B(i_1 + 1) \leq \cdots \leq B(i_2) \leq A(j_1 + 1) \leq \cdots \leq A(j_2) \leq \cdots \leq \\ 
& B(i_\alpha + 1) \leq \cdots \leq B(n-r-1) \leq A(j_\alpha +1) \leq \cdots \leq A(r-1) \leq A(r) = B(n-r)
\end{align*}

if $a_1 \leq b_1$, we have:
\begin{align*}
& A(1) \leq \cdots \leq A(j_0) \leq B(1) \leq \cdots \leq B(i_1) \leq A(j_0 +1) \leq \cdots \leq A(j_1) \leq \\
& B(i_1 + 1) \leq \cdots \leq B(i_2) \leq A(j_1 + 1) \leq \cdots \leq A(j_2) \leq \cdots \leq \\ 
& B(i_\alpha + 1) \leq \cdots \leq B(n-r-1) \leq A(j_\alpha +1) \leq \cdots \leq A(r-1) \leq A(r) = B(n-r)
\end{align*}

We define the first matrix $v_{r \times n}$ as follows: 
we have a $r \times (r+1)$ block matrix and a $r \times (n-r-1)$ block matrix comprising the matrix $v_{r \times n}$. The $r \times (r+1)$ block matrix has diagonal and super-diagonal entries defined as follows:
\begin{align*}
 v_{i,i} = x^{a_i}, \text{ for } i = 1, \cdots, r; \qquad v_{i, i+1} = y^{a_i} , \text{ for } i = 1, \cdots, r; 
\end{align*}
All the remaining entries of this block are zero. The $r \times (n-r-1)$ block has non-zero entries only in rows $j_0+1, j_1 + 1, \cdots, j_\alpha +1$, and all other rows have all zero entries. 
For $0 \leq l \leq \alpha -1$, the row $j_l +1$ have non-zero entries in columns $r+2+i_l $ upto $r+1+i_{l+1}$ and zero entries for all other columns. The non-zero entries are: 
\[  v_{j_l + 1, r + 2 + i_l} = x^{A(j_l + 1) - B(i_l + 1)}y^{B(i_l + 1) - A(j_l)}, \cdots , v_{j_l + 1, r + 1 + i_{l+1}} = x^{A(j_l + 1) - B(i_{l+1})}y^{B(i_{l+1}) - A(j_l)}
\]
The row $j_\alpha +1$ has non-zero entries in columns $r+2+i_\alpha $ upto $n$, and zero entries in all other columns. The non-zero entries are: 
\[
v_{j_\alpha + 1, r+2+ i_\alpha } = x^{A(j_\alpha + 1) - B(i_\alpha + 1)}y^{B(i_\alpha + 1) - A(j_\alpha)}, \cdots, v_{j_\alpha + 1, n} = x^{A(j_\alpha + 1) - B(n-r-1)}y^{B(n-r-1) - A(j_\alpha)}
\]

We now proceed to define the second matrix $u_{n \times (n-r)}$. The matrix $u$ comprises of three blocks, a $(j_0 +1) \times (n-r)$ block $u1$ consisting of the first $j_0 +1$ rows of $u$, a $(r-j_0-1) \times (n-r)$ block $u2$ consisting of rows $j_0 + 2$ upto $r$ of $u$, and a $(n-r) \times (n-r)$ block $u3$ consisting of rows $r+1$ upto $n$ of $u$.

The matrix $u1$ has non-zero entries in the first column and zero entries in all remaining columns. The non-zero entries are: 
\[ u_{1,1} = - y^{b_1}, u_{2,1} = (-1)^2 x^{A(1)}y^{B(1) - A(1)}, \cdots, u_{j_0 +1,1} = (-1)^{j_0 + 1}x^{A(j_0)}y^{B(1) - A(j_0)} \]
The matrix $u2$ has non-zero entries in columns $i_1+1, i_2+1, \cdots, i_\alpha +1$ and $(n-r)$, and zero entries in all other columns. For any $1 \leq l \leq \alpha$, the column $i_l +1$ has non-zero entries in rows $j_{l-1}+2, \cdots, j_l +1$ and zero entries in all other rows. The non-zero entries are: 
\begin{align*} 
u_{j_{l-1} + 2, i_l + 1} &= (-1)^{j_{l-1} + 2 - (j_{l-1} + 2)}x^{A(j_{l-1} + 1) - B(i_l)}y^{B(i_l + 1) - A(j_{l-1} + 1)}\\
u_{j_{l-1} + 3, i_l + 1} &= (-1)^{j_{l-1} + 3 - (j_{l-1} + 2)}x^{A(j_{l-1} + 2) - B(i_l)}y^{B(i_l + 1) - A(j_{l-1} + 2)}\\
&\vdots\\
u_{j_l + 1, i_l + 1} &= (-1)^{j_l + 1 - (j_{l-1} + 2)}x^{A(j_l) - B(i_l)}y^{B(i_l + 1) - A(j_l)}
\end{align*}
The $(n-r)$th column has non-zero entries in rows $j_\alpha +2$ upto $r$, and has zero entries in all other rows. The non-zero entries are:
\begin{align*}
&u_{j_\alpha +2,n-r} = (-1)^{j_\alpha+2 - (j_\alpha + 2)}x^{A(j_{\alpha}+1) - B(n-r-1)}y^{B(n-r) - A(j_\alpha +1)}, \cdots \\
& \qquad \qquad \qquad \cdots u_{r,n-r} = (-1)^{r - (j_\alpha +2)}x^{A(r-1)-B(n-r-1)}y^{B(n-r)-A(r-1)}
\end{align*}
The non-zero entries of matrix $u3$ are along the diagonal, the sub-diagonal, and in the $(n-r)$th column. The diagonal entries are:
\begin{align*} u_{r+i, i} = \begin{cases} 0 , &\text{ if } i=1 \\- y^{b_{i}}, &\text{ if } 2 \leq i \leq (n-r) \end{cases}
\end{align*}
The sub-diagonal entries are:
\[ u_{r+1+i, i} = (-1)^{\beta_i} x^{b_i}, \text{ for } i = 1, \cdots, n-r-1 \]
where $\beta_i$ denotes the number of $A(j)$'s lying strictly in between $B(i)$ and $B(i-1)$. We also have $u_{r+1, n-r} = (-1)^{\beta_{n-r}x^{b_{n-r}}}$. All other entries are zero.
\begin{propn}\label{nonempty}
The matrix $v$ is surjective, $u$ is injective, and $v\circ u = 0$. In particular, we have an exact sequence 
\[ 0 \rarrow \oplus_{j=1}^{n-r} \mathcal{O}_{\P^1}(-b_j) \stackrel{u}{\rarrow} \mathcal{O}_{\P^1}^{\oplus n} \stackrel{v}{\rarrow} \oplus_{i=1}^r \mathcal{O}_{\P^1} (a_i) \rarrow 0 \]
\end{propn}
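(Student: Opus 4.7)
The plan is to verify the three claims --- surjectivity of $v$, injectivity of $u$, and $v\circ u=0$ --- directly from the explicit matrix formulas, exploiting the staircase shape of $v$ and the extreme sparsity of $u$. For the first two, it suffices to exhibit two maximal minors that are coprime monomials in $x,y$; the common vanishing locus of the maximal minors is then empty on $\P^1$, giving surjectivity (resp.\ injectivity).

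For $v$, the $r\times r$ submatrix formed by columns $1,\dots,r$ is upper triangular with diagonal $x^{a_1},\dots,x^{a_r}$, so its determinant is $x^e$; the submatrix formed by columns $2,\dots,r+1$ is lower triangular with diagonal $y^{a_1},\dots,y^{a_r}$, of determinant $y^e$. For $u$, the block $u_3$ has zero $(1,1)$-entry, and expanding $\det(u_3)$ along its first row (whose only nonzero entry is the $(1,n-r)$-position $\pm x^{b_{n-r}}$) against its upper triangular $(n-r-1)\times(n-r-1)$ cofactor with diagonal $\pm x^{b_1},\dots,\pm x^{b_{n-r-1}}$ yields $\pm x^e$. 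Replacing the top row of $u_3$ by the first row of $u$, namely $(-y^{b_1},0,\dots,0)$, gives a lower triangular $(n-r)\times(n-r)$ matrix with diagonal $-y^{b_1},\dots,-y^{b_{n-r}}$, of determinant $\pm y^e$. Coprimality of the two minors in each case settles the first two claims.

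The substance lies in checking $v\circ u=0$, which I plan to carry out column by column of $u$. For a fixed column $k$, the nonzero entries of $u_{\cdot,k}$ lie in very few rows: the entire first column of $u_1$ when $k=1$, a short consecutive segment of one column of $u_2$ when $k=i_l+1$ or $k=n-r$, and at most two entries of $u_3$ coming from the (sub)diagonal and the last column. Each row $i$ of $v$ is itself supported on the staircase positions $i,i+1$ and, when $i=j_l+1$, on one or more entries of the right-hand block. Multiplying through, each entry $(vu)_{i,k}$ reduces to a sum of at most two monomials; substituting the formulas shows that their $x$- and $y$-exponents match (each has the shape $x^{A(\cdot)-B(\cdot)}y^{B(\cdot)-A(\cdot)}$, with non-negativity of the exponents guaranteed by Lemma \ref{order}), while the signs are opposite, courtesy of the alternating $(-1)^p$ in $u_1,u_2$ and the parity corrections $(-1)^{\beta_i}$ in $u_3$.

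The main obstacle is the bookkeeping for this last step: one must partition columns of $u$ according to where $k$ sits in the interleaving of the $A(j)$'s and $B(i)$'s, identify the unique row(s) of $v$ that pair with the nonzero rows of $u_{\cdot,k}$, and verify that the parities $\beta_i$ were chosen precisely so the two contributions cancel. I expect the case analysis to split into $k=1$, $k=i_l+1$ for $1\le l\le\alpha$, $k=n-r$, and $i_l<k<i_{l+1}$, each case reducing to a one-line monomial identity.
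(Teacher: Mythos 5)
Your proposal takes essentially the same route as the paper: surjectivity of $v$ and injectivity of $u$ are established via the same two pairs of coprime monomial maximal minors (columns $1,\dots,r$ and $2,\dots,r+1$ of $v$; rows $r+1,\dots,n$ and $\{1\}\cup\{r+2,\dots,n\}$ of $u$), and $v\circ u=0$ is verified column-by-column using the interleaving of the $A(j)$'s and $B(i)$'s from Lemma~\ref{order} and the parity conventions $\beta_i$, with each nonzero entry of $vu$ reducing to two cancelling monomials. The paper carries out the bookkeeping you outline by splitting into the cases $q=1$, $q=i_l+1$, $q=n-r$, and generic $q$, under the two orderings $a_1>b_1$ and $a_1\leq b_1$, exactly as you anticipate.
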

\begin{proof}
It follows from the definition of $v$ that every entry in the $i$th row of $v$ is either zero or a monomial of degree $a_i$ in $x$ and $y$, where $x$ and $y$ are the co-ordinate functions of $\P^1$. Hence, $v$ defines a morphism from $\mathcal{O}_{\P^1}^{\oplus n} $ to $\oplus_{i=1}^r \,\mathcal{O}_{\P^1}(a_i)$. 

Similarly, it follows from definition of $u$ that every entry in the $j$th column of $u$ is either zero or a monomial of degree $b_j$ in $x$ and $y$, a posteriori, defining a morphism from $\oplus_{j=1}^{n-r} \,\mathcal{O}_{\P^1}(-b_j)$ to $\mathcal{O}_{\P^1}^{\oplus n}$.

To show $v$ is surjective, we look at two $r \times r$ minors of $v$, the first one consisting of the first $r$ columns and the second one consisting of columns $2, \cdots , (r+1)$
\[ (v_{p,q})_{1\leq p ,q\leq r} \text{ and } (v_{p,q})_{1 \leq p \leq r,2 \leq q \leq r+1 } \]
The determinant of first one is $x^{a_1 + \cdots + a_r}$ and second one is $y^{a_1 + \cdots + a_r}$, which do not vanish simultaneously at any point of $\P^1$.

Similarly, to show $u$ is injective, we look at two $(n-r) \times (n-r)$ minors, the first one consisting of rows $(r+1), \cdots, n$ and the second one consisting of row $1$ and rows $r+2, \cdots , n$
\[ (u_{p,q})_{r+1\leq p \leq n, 1\leq q\leq n-r} \text{ and } (u_{p,q})_{p=1, r+2 \leq p \leq n, 1\leq q \leq n-r} \]
The determinant of first one is $(-1)^{\beta_1 + \cdots + \beta_{n-r} + n-r+1} x^{b_1 + \cdots + b_{n-r}}$ and for the second one is $(-1)^{n-r} y^{b_1 + \cdots + b_{n-r}}$, which do not vanish simultaneously at any point of $\P^1$.

Before we begin proof of third part, we would like to explicitly write down the $\beta_i 's$ used in the description of the matrix $u$. Recall that $\beta_i$ is the number of $A(j)'s$ lying strictly in between $B(i)$ and $B(i-1)$. Thus, when we are in first case where $a_1 > b_1$, we have
\[
\beta_i = \begin{cases}
j_l - j_{l-1},  &\text{ if } i = i_l + 1, \, 1 \leq l \leq \alpha \\
r - j_\alpha - 1 ,  &\text{ if } i = n-r\\
0,  &\text{ otherwise}
\end{cases}
\]

and when we are in second case where $a_1 \leq b_1$, we have
\[
\beta_i = \begin{cases}
j_0, & \text{ if } i=1\\
j_l - j_{l-1},  &\text{ if } i = i_l + 1, \, 1 \leq l \leq \alpha \\
r - j_\alpha - 1 ,  &\text{ if } i = n-r\\
0,  &\text{ otherwise}
\end{cases}
\]
Let $v_p$ denote the $p$th row of $v$, and $u_q$ denote the $q$th column of $u$. Our goal is to show that for any $1 \leq q \leq (n-r)$, we have $v_p \cdot u_q = 0$ for every $1 \leq p \leq r$, and hence we can conclude that $v \cdot u =0$.

We first analyze the case when $a_1 >b_1$. Now $u_1$ has nonzero entry in the first and $(r+2)$th row, and $v_1$ is the only row in $v$ with nonzero entries in the respective columns. We see 
\[ v_1 \cdot u_1 = x^{a_1} \cdot (-y^{b_1}) + x^{A(1) - B(1)}y^{B(1)}\cdot x^{b_1} = -x^{a_1}y^{b_1} + x^{a_1}y^{b_1} = 0 \]
Thus, for any $1 \leq p \leq r$ we have $v_p \cdot u_1 = 0$.

For $2 \leq q \leq n-r-1$ and $q \neq i_1 + 1, \cdots , i_\alpha + 1$, $u_q$ has nonzero entry in $(r+q)$th and $(r+1 + q)$th row. By construction, $u_{r+q,q} = -y^{b_q}$ and $u_{r+q+1,q} = x^{b_q}$.
Let $A(j_l) \leq B(q) \leq A(j_l + 1)$ for some $0 \leq l \leq \alpha$ as per our chosen ordering, then the $(r+q)$th and $(r+1+q)$th columns of $v$ have nonzero entry only in row $j_l + 1$, and the entries are $v_{j_l + 1, r+q} = x^{A(j_l + 1) - B(q-1)}y^{B(q-1) - A(j_l)}$ and $v_{j_l + 1, r+1+q} = x^{A(j_l + 1) - B(q)}y^{B(q) - A(j_l)}$. Thus, 
\begin{align*} 
v_{j_l + 1} \cdot u_q &= x^{A(j_l + 1) - B(q-1)}y^{B(q-1) - A(j_l)} \cdot (-y^{b_q}) + x^{A(j_l + 1) - B(q)}y^{B(q) - A(j_l)} \cdot x^{b_q} \\
& = - x^{A(j_l + 1) - B(q-1)}y^{B(q) - A(j_l)} + x^{A(j_l + 1) - B(q-1)}y^{B(q) - A(j_l)} = 0 
\end{align*}
Hence, for any $1 \leq p \leq r$ and $2 \leq q \leq n-r-1$, $q \neq i_1+1, \cdots, i_\alpha +1$, we have $v_p \cdot u_q = 0$.

Suppose $q = i_l + 1$ for some $1 \leq l \leq \alpha$, by construction $u_{i_l + 1} $ has nonzero entries in rows $j_{l-1} + 2 , \cdots, j_l + 1$, $r+1+i_l$ and $r+1 + (i_l + 1)$. By our chosen ordering, we have 
\[ B(i_l) \leq A(j_{l-1} + 1) \leq \cdots \leq A(j_l) \leq B(i_l + 1) \leq A(j_l + 1) \]
Clearly the rows $j_{l-1} +1, \cdots, j_l +1$ of $v$ are the only ones in which there is a nonzero entry in the columns corresponding to the aforementioned rows of $u$. We have 
\begin{align*}
 v_{j_{l-1}+1} \cdot u_{i_l + 1} & = y^{a_{j_{l-1}+1}} \cdot x^{A(j_{l-1}+1) - B(i_l)}y^{B(i_l + 1) - A(j_{l-1}+1)}  \\
 & \qquad \qquad + x^{A(j_{l-1}+1) - B(i_l)}y^{B(i_l) - A(j_{l-1})} \cdot (-y^{b_l}) \\
 &  = x^{A(j_{l-1}+1) - B(i_l)}y^{B(i_l + 1) - A(j_{l-1})} - x^{A(j_{l-1}+1) - B(i_l)}y^{B(i_l + 1) - A(j_{l-1})} =0 \qquad \qquad \;\; 
\end{align*}
\begin{align*}
 v_{j_l +1} \cdot u_{i_l + 1} & = x^{a_{j_l +1}} \cdot (-1)^{j_l + 1 - (j_{l-1}+2)}x^{A(j_l) - B(i_l)}y^{B(i_l + 1) - A(j_l)} \\
 & \qquad \qquad + x^{A(j_l +1) - B(i_l + 1)}y^{B(i_l + 1) - A(j_l)} \cdot (-1)^{j_l - j_{l-1}}x^{b_l}\\
 & = (-1)^{j_l -j_{l-1}-1}x^{A(j_l + 1) - B(i_l)}y^{B(i_l + 1) - A(j_l)} \\
 & \qquad \qquad + (-1)^{j_l - j_{l-1}}x^{A(j_l + 1) - B(i_l)}y^{B(i_l + 1) - A(j_l)} = 0\\
\end{align*}
For $c = 2,3 , \cdots , j_l - j_{l-1}$, we have
\begin{align*}
v_{j_{l-1} + c} \cdot u_{i_l + 1} & = x^{a_{j_{l-1} + c}}  \cdot (-1)^{j_{l-1}+c - (j_{l-1}+2)}x^{A(j_{l-1}+c-1) - B(i_l)}y^{B(i_l + 1) - A(j_{l-1}+c-1)} \\
& \qquad \qquad + y^{a_{j_{l-1}+c}} \cdot (-1)^{j_{l-1}+c+1 - (j_{l-1}+2)}x^{A(j_{l-1}+c) - B(i_l)}y^{B(i_l +1) - A(j_{l-1} + c)}\\
& = (-1)^{c-2} x^{A(j_{l-1}+c) - B(i_l)}y^{B(i_l + 1) - A(j_{l-1}+c-1)} \\
& \qquad \qquad + (-1)^{c-1}x^{A(j_{l-1}+c) - B(i_l)}y^{B(i_l + 1) - A(j_{l-1}+c-1)} = 0
\end{align*}
Hence, for any $1 \leq p \leq r$ and $q = i_l + 1$ for $1 \leq l \leq \alpha$, we have $v_p \cdot u_q = 0$.

By construction, $u_{n-r}$ has a non-zero entry in rows $j_\alpha + 2, \cdots, r, r+1$ and $n$. The rows $v_p$ of $v$ such that there is a non-zero entry in any of the columns corresponding to non-zero rows of $u_{n-r}$ are $p = j_\alpha + 1, \cdots, r$. We have
\begin{align*}
v_{j_\alpha +1} \cdot u_{n-r} & = y^{a_{j_\alpha + 1}} \cdot (-1)^{j_\alpha +2 - (j_\alpha + 2)}x^{A(j_\alpha + 1) - B(n-r-1)}y^{B(n-r) - A(j_\alpha + 1)} \\
& \qquad \qquad + x^{A(j_\alpha + 1) - B(n-r-1)}y^{B(n-r-1) - A(j_\alpha)} \cdot (-y^{b_{n-r}})\\
& = x^{A(j_\alpha +1) - B(n-r-1)}y^{B(n-r) - A(j_\alpha)} - x^{A(j_\alpha + 1) - B(n-r-1)}y^{B(n-r) - A(j_\alpha)} = 0
\end{align*}
Similarly,
\begin{align*}
v_r \cdot u_{n-r} = x^{a_r} \cdot (-1)^{r - (j_\alpha +2)}x^{A(r-1) - B(n-r-1)}y^{B(n-r) - A(r-1)} + y^{a_r} \cdot (-1)^{\beta_{n-r}}x^{b_{n-r}}
\end{align*}
Recall that $\beta_{n-r} = r - j_{\alpha} -1$ and $A(r) = B(n-r)$. Thus, we have
\begin{align*}
v_r \cdot u_{n-r} &= (-1)^{r - j_\alpha -2}x^{A(r) - B(n-r-1)}y^{B(n-r) - A(r-1)} + (-1)^{r-j_\alpha -1}x^{b_{n-r}}y^{a_r}\\
&= (-1)^{r-j_\alpha -2}x^{B(n-r) - B(n-r-1)}y^{A(r) - A(r-1)} + (-1)^{r-j_\alpha -1}x^{b_{n-r}}y^{a_r}\\
&= (-1)^{r-j_\alpha -2}x^{b_{n-r}}y^{a_r} + (-1)^{r-j_\alpha -1}x^{b_{n-r}}y^{a_r} = 0
\end{align*}
For any $2 \leq c \leq r - j_\alpha -1$, we have 
\begin{align*}
v_{j_\alpha +c} \cdot u_{n-r} &= x^{a_{j_\alpha + c}} \cdot (-1)^{j_\alpha + c - (j_\alpha + 2)}x^{A(j_\alpha + c -1) - B(n-r-1)}y^{B(n-r) - A(j_\alpha + c-1)} \\
& \qquad \qquad + y^{a_{j_\alpha + c}} \cdot (-1)^{j_\alpha + c + 1 - (j_\alpha + 2)} x^{A(j_\alpha + c) - B(n-r-1)}y^{B(n-r) - A(j_\alpha + c)}\\
&= (-1)^{c-2}x^{A(j_\alpha + c) - B(n-r-1)}y^{B(n-r) - A(j_\alpha + c-1)} \\
& \qquad \qquad + (-1)^{c-1}x^{A(j_\alpha + c) - B(n-r-1)}y^{B(n-r) - A(j_\alpha + c-1)} = 0
\end{align*}
Thus, we have $v_p \cdot u_{n-r} = 0$ for any $1 \leq p \leq r$.

We now analyze the case $a_1 \leq b_1$. Observe that for $i_1 + 1 \leq q \leq (n-r)$, the proof of the fact that $v_p \cdot u_q = 0$ for any $1 \leq p \leq r$ is exactly same as above. We only need to work out the cases $1 \leq q \leq i_1$.

By construction, the column $u_1$ has non-zero entries in rows $1,2 , \cdots, j_0 +1$ and $r+2$. The only rows of $v$ which has non-zero entry in corresponding columns are $1 \leq p \leq j_0 + 1$. We have 
\[ v_1 \cdot u_1 = x^{a_1} \cdot (-y^{b_1}) + y^{a_1} \cdot (-1)^{2}x^{A(1)}y^{B(1) - A(1)} = -x^{a_1}y^{b_1} + x^{a_1}y^{b_1} = 0 \]
For $2 \leq c \leq j_0$, we have 
\begin{align*}
v_c \cdot u_1 &= x^{a_c} \cdot (-1)^{c}x^{A(c-1)}y^{B(1) - A(c-1)} + y^{a_c} \cdot (-1)^{c+1}x^{A(c)}y^{B(1) - A(c)}\\
&= (-1)^c x^{A(c)}y^{B(1) - A(c-1)}+ (-1)^{c+1}x^{A(c)}y^{B(1) - A(c-1)}=0
\end{align*}
and lastly
\begin{align*}
v_{j_0 + 1} \cdot u_1 &= x^{a_{j_0 + 1}} \cdot (-1)^{j_0 + 1}x^{A(j_0)}y^{B(1) - A(j_0)} + x^{A(j_0 + 1) - B(1)}y^{B(1) - A(j_0)} \cdot (-1)^{\beta_1}x^{b_1}\\
&= (-1)^{j_0 + 1}x^{A(j_0 + 1)}y^{B(1) - A(j_0)} + (-1)^{j_0}x^{A(j_0 + 1)}y^{B(1) - A(j_0)} = 0
\end{align*}
Thus, $v_p \cdot u_1 = 0$ for all $1 \leq p \leq r$.

For $2 \leq q \leq i_1$, observe that $u_q$ has non-zero entry in row $r+ 1 + q-1$ and $r+1+q$. Clearly, $j_0 +1$ is the only row in $v$ with non-zero entry in the corresponding columns. We have
\begin{align*}
v_{j_0 + 1} \cdot u_q &= x^{A(j_0 + 1) - B(q-1)}y^{B(q-1) - A(j_0)} \cdot (-y^{b_q}) + x^{A(j_0 + 1) - B(q)}y^{B(q) - A(j_0)} \cdot x^{b_q}\\
&= - x^{A(j_0 + 1) - B(q-1)}y^{B(q) - A(j_0)} + x^{A(j_0 + 1) - B(q-1)}y^{B(q) - A(j_0)} = 0
\end{align*}
Thus, $v_p \cdot u_q = 0$ for all $1 \leq p \leq r$ and $2 \leq q \leq i_1$.

In conclusion, we have $v \circ u = 0$.
\end{proof}

Recall from section \ref{prelim} that $M(b_\bullet)$ is the locus of morphisms in $Mor_e(\P^1, G(r,n))$ with the restricted universal quotient bundle being isomorphic to $\mathcal{O}_{\P^1}(b_1) \oplus \cdots \oplus \mathcal{O}_{\P^1}(b_{n-r})$, and $M'(a_\bullet)$ is the locus of morphisms in $Mor_e(\P^1, G(r,n))$ with the restricted universal sub-bundle being isomorphic to $\mathcal{O}_{\P^1}(-a_1) \oplus \cdots \oplus \mathcal{O}_{\P^1}(-a_r)$. We see that 
\begin{cor}\label{corollary9}
The intersection of the loci $M(b_\bullet)$ and $M'(a_\bullet)$ is nonempty. 
\end{cor}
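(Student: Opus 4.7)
The plan is to recognize that Corollary \ref{corollary9} follows almost immediately from Proposition \ref{nonempty} combined with Lemma \ref{correspondence}. The substantive work --- constructing the explicit matrices $u$ and $v$ realizing the short exact sequence
\[ 0 \rarrow \oplus_{j=1}^{n-r} \mathcal{O}_{\P^1}(-b_j) \stackrel{u}{\rarrow} \mathcal{O}_{\P^1}^{\oplus n} \stackrel{v}{\rarrow} \oplus_{i=1}^r \mathcal{O}_{\P^1}(a_i) \rarrow 0 \]
for arbitrary admissible sequences $a_\bullet, b_\bullet$ --- has already been carried out in Proposition \ref{nonempty}. What remains is only to translate this existence statement into the language of the loci $M(b_\bullet)$ and $M'(a_\bullet)$.

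Given such a sequence, I apply Lemma \ref{correspondence} to the surjection $v$ to obtain a degree $e$ morphism $f : \P^1 \rarrow \G$ with $f^*(\mathcal{S}^*) \cong \oplus_{i=1}^r \mathcal{O}_{\P^1}(a_i)$; equivalently, $f^*(\mathcal{S}) \cong \oplus_{i=1}^r \mathcal{O}_{\P^1}(-a_i)$, so that $f \in M'(a_\bullet)$ by definition. To verify that $f \in M(b_\bullet)$, I pull back the dual of the tautological sequence on $\G$, namely $0 \rarrow \mathcal{Q}^* \rarrow \mathcal{O}^{\oplus n} \rarrow \mathcal{S}^* \rarrow 0$, and compare it with the sequence produced by Proposition \ref{nonempty}; since both present $f^*(\mathcal{S}^*)$ as the same quotient of $\mathcal{O}^{\oplus n}_{\P^1}$, uniqueness of the kernel forces $f^*(\mathcal{Q}^*) \cong \oplus_{j=1}^{n-r} \mathcal{O}_{\P^1}(-b_j)$, hence $f^*(\mathcal{Q}) \cong \oplus_{j=1}^{n-r} \mathcal{O}_{\P^1}(b_j)$, that is, $f \in M(b_\bullet)$.

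Finally, I note that Proposition \ref{nonempty} was proved under the reduction $(n-r) \leq r$. This is harmless here, because the dualization map that sends $[\O_{\P^1}^{\oplus n} \rarrow E]$ to $[\O_{\P^1}^{\oplus n} \rarrow \mathcal{K}^*]$ induces the canonical isomorphism $Mor_e(\P^1, G(r,n)) \cong Mor_e(\P^1, G(n-r,n))$ already invoked in the proof of Proposition \ref{codimensionlemma}; this isomorphism interchanges the roles of sub- and quotient bundles, so nonemptiness in the case $r < n-r$ is deduced from the case $(n-r) \leq r$ after swapping $a_\bullet$ and $b_\bullet$. Since the main technical obstacle --- the combinatorial construction of the matrices $u,v$ --- has already been cleared in Proposition \ref{nonempty}, there is no further difficulty; the entire content of this corollary is the translation via Lemma \ref{correspondence}.
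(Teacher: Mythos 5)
Your proof is correct and follows essentially the same route as the paper's: invoke Proposition \ref{nonempty} to produce the explicit short exact sequence, then apply Lemma \ref{correspondence} to get a morphism whose restricted sub-bundle and quotient bundle have the prescribed splitting types. You are slightly more explicit than the paper in two places --- identifying the kernel of $v$ with $f^*(\mathcal{Q}^*)$ to verify $f \in M(b_\bullet)$, and spelling out that the reduction to $(n-r) \leq r$ is harmless via the dualization isomorphism --- but these are elaborations of the paper's own reasoning, not a different argument.
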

\begin{proof}
It follows from Proposition \ref{nonempty} that we have an exact sequence
\begin{equation}\label{eqn4}
0 \rarrow \oplus_{j=1}^{n-r} \,\mathcal{O}_{\P^1}(-b_j) \stackrel{u}{\rarrow} \mathcal{O}_{\P^1}^{\oplus n} \stackrel{v}{\rarrow} \oplus_{i=1}^r \, \mathcal{O}_{\P^1}(a_i) \rarrow 0 
\end{equation}
The surjection $v$ in equation \ref{eqn4} corresponds uniquely to an element of $Mor_e(\P^1, G(r,n))$, say $\varphi_v$. Moreover, it follows from our identification of $v$ and $\varphi_v$ in Lemma \ref{correspondence} and from equation \ref{eqn4} that $\varphi_v^*(\mathcal{S})$ is isomorphic to $\oplus_{i=1}^r \mathcal{O}_{\P^1}(-a_i)$ and $\varphi_v^*(\mathcal{Q})$ is isomorphic to $\oplus_{j=1}^{n-r} \mathcal{O}_{\P^1}(b_j)$, where $\mathcal{S}$ is the universal sub-bundle and $\mathcal{Q}$ is the universal quotient bundle of $\G$. Hence, the intersection of $M(b_\bullet)$ and $M'(a_\bullet)$ is non-empty.
\end{proof}

\section{\normalfont\textsc{The intersection locus is generically transverse}}\label{transverseintersectionsection}
In this section, we are going to show that there is a point in $M(b_\bullet) \cap M'(a_\bullet)$ where the intersection is transverse. As a consequence, we see that $M(b_\bullet)$ and $M'(a_\bullet)$ intersect generically transversely. 

More precisely, we want to show that there exists an exact sequence 
\begin{equation}\label{eqntrans} 
 0 \rarrow \oplus_{j=1}^{n-r}\,\mathcal{O}_{\P^1}(-b_j) \stackrel{u}{\rarrow} \mathcal{O}^{\oplus n} \stackrel{v}{\rarrow} \oplus_{i=1}^r \,\mathcal{O}_{\P^1}(a_i) \rarrow 0 
\end{equation}
where  $a_1 \geq \cdots \geq a_r \geq 0$, $0 \leq b_1 \leq \cdots \leq b_{n-r}$, $(n-r) \leq r$, and $a_1 + \cdots + a_r = b_1 + \cdots + b_{n-r} = e$, such that $M(b_\bullet)$ and $M'(a_\bullet)$ intersect transversely at the morphism $\varphi_v$ corresponding to the surjection $v$ (see Lemma \ref{correspondence}).

For ease of notation, let $E = \mathcal{O}(a_1) \oplus \cdots \oplus \mathcal{O}(a_r)$ and $K = \mathcal{O}(-b_1) \oplus \cdots \oplus \mathcal{O}(-b_{n-r})$. Applying $Hom (K, \bullet)$ and $Hom(\bullet, E)$ to equation \ref{eqntrans}, we obtain two long exact sequences
\begin{equation}\label{eqn}
\begin{aligned}
&0 \rarrow Hom(K,K) \rarrow Hom(K, \mathcal{O}^{\oplus n}) \rarrow Hom(K,E) \rarrow Ext^1(K,K) \rarrow 0  \\
& \text{ and } \\
&0 \rarrow Hom(E,E) \rarrow Hom(\mathcal{O}^{\oplus n}, E) \rarrow Hom(K, E) \rarrow Ext^1(E,E) \rarrow 0 
\end{aligned}
\end{equation}

We observe that 
\begin{rem}\label{remark10} To show that $M(b_\bullet)$ and $M'(a_\bullet)$ intersect transversely at $\varphi_v$, it is enough to show that the kernels of the maps $Hom(K,E) \rarrow Ext^1(K,K)$ and $Hom(K,E) \rarrow Ext^1(E,E)$ intersect transversely. 
\end{rem}

Let $W_1$ be the kernel of the map $Hom(K,E) \rarrow Ext^1(K,K)$ and $W_2$ be the kernel of the map $Hom(K,E) \rarrow Ext^1(E,E)$. Using elementary linear algebra, we deduce the following Lemma. 
\begin{lem}\label{lemma11}
The subspaces $W_1$ and $W_2$ of $Hom(K,E)$ intersect transversely iff they span $Hom(K,E)$.
\end{lem}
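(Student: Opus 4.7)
The plan is to interpret the statement as a purely linear-algebraic assertion and reduce it to the standard inclusion--exclusion formula for dimensions of subspaces of a finite-dimensional vector space. The scheme-theoretic tangent spaces $T_{\varphi_v}M(b_\bullet)$ and $T_{\varphi_v}M'(a_\bullet)$ sit inside $T_{\varphi_v}M$, which is identified via the Kodaira--Spencer isomorphism (as in the diagram of Lemma \ref{claim1}) with $\mathrm{Hom}(K,E)$. Under this identification, the kernels $W_1$ and $W_2$ of the connecting maps $\mathrm{Hom}(K,E) \rarrow \mathrm{Ext}^1(K,K)$ and $\mathrm{Hom}(K,E) \rarrow \mathrm{Ext}^1(E,E)$ are exactly the tangent spaces to the two loci at $\varphi_v$, so the scheme-theoretic transversality of $M(b_\bullet) \cap M'(a_\bullet)$ at $\varphi_v$ is equivalent to the transverse intersection of the linear subspaces $W_1, W_2 \subset V := \mathrm{Hom}(K,E)$.

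First, I unpack the definition. Two linear subspaces $W_1, W_2$ of a finite-dimensional vector space $V$ are said to intersect transversely if
\[ \mathrm{codim}_V(W_1 \cap W_2) = \mathrm{codim}_V(W_1) + \mathrm{codim}_V(W_2). \]
Rewriting codimensions as $\dim V - \dim W_i$, this condition becomes
\[ \dim V - \dim(W_1 \cap W_2) = 2\dim V - \dim W_1 - \dim W_2, \]
i.e.\ $\dim W_1 + \dim W_2 - \dim(W_1 \cap W_2) = \dim V$.

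Next, invoking the standard identity $\dim(W_1 + W_2) = \dim W_1 + \dim W_2 - \dim(W_1 \cap W_2)$, the equation above is equivalent to $\dim(W_1 + W_2) = \dim V$, i.e.\ $W_1 + W_2 = V$, which is the claim that $W_1$ and $W_2$ span $\mathrm{Hom}(K,E)$. This completes the proof.

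There is no substantive obstacle here: the content of the lemma is purely the equivalence of two formulations of transversality for subspaces, and the real work lies ahead in actually exhibiting, for the explicit $u$ and $v$ constructed in the previous section (or for a deformation thereof), elements of $\mathrm{Hom}(K,E)$ that together span the image modulo $W_1 \cap W_2$; that is the subject of the next steps of the paper.
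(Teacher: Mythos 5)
Your proof is correct and takes essentially the same route as the paper: both reduce the claim to the standard dimension identity $\dim(W_1 + W_2) = \dim W_1 + \dim W_2 - \dim(W_1 \cap W_2)$, the only cosmetic difference being that the paper phrases the manipulation in terms of codimensions (via the identity $\operatorname{codim}(W_1 \cap W_2 \subset W_2) = \operatorname{codim}(W_1 \subset W_1 + W_2)$) while you work directly with dimensions.
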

\begin{proof}
Note that $W_1$ and $W_2$ intersect transversely if and only if 
\[ codim(W_1 \subset Hom(K,E)) = codim((W_1 \cap W_2) \subset W_2) \]
Furthermore, it is a known fact that for any two subspaces $W_1$ and $W_2$, we have 
\[ codim((W_1 \cap W_2) \subset W_2) = codim(W_1 \subset (W_1 + W_2)) \]
Our assertion follows from these two equations.
\end{proof}

We infer from the exact sequences in equation \ref{eqn} that $W_1$ is the image of the map \newline $Hom(K, \mathcal{O}^{\oplus n}) \rarrow Hom(K,E)$, and $W_2$ is the image of the map $Hom(\mathcal{O}^{\oplus n}, E) \rarrow Hom(K,E)$. 

Consider the map 
\[ \Psi : Hom(K, \O^{\oplus n}) \times Hom(\O^{\oplus n},E) \rarrow Hom(K,E) \]given by $\Psi(\varphi,\psi) = \psi \circ u + v \circ \varphi$. Clearly, $W_1$ and $W_2 $ span $Hom(K,E)$ iff $\Psi$ is surjective.

Consider the bilinear map of vector spaces 
\[ \Phi : Hom(K, \mathcal{O}^{\oplus n}) \times Hom(\mathcal{O}^{\oplus n},E) \rarrow Hom(K,E) \] given by $\Phi(\varphi, \psi) = \psi \circ \varphi$. We see that $\Phi$ is a bilinear smooth map, so we can look at $D\Phi_{(u,v)}$. Identifying the tangent spaces with the original vector space, we get a map
\[ D\Phi_{(u,v)} : Hom(K, \mathcal{O}^{\oplus n}) \times Hom(\mathcal{O}^{\oplus n}, E) \rarrow Hom(K,E) \] given by $D\Phi_{(u,v)}(\varphi, \psi) = \psi \circ u + v \circ \varphi$. Therefore, we have $D\Phi_{(u,v)} = \Psi$ which yields 
\begin{lem}\label{lemma12}
The subspaces $W_1$ and $W_2$ intersect transversely iff $D\Phi_{(u,v)}$ is surjective.
\end{lem}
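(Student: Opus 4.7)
The statement really just packages together an elementary derivative computation with the linear-algebra fact already established in Lemma~\ref{lemma11}. My plan is to compute $D\Phi_{(u,v)}$ explicitly, identify its image with $W_1 + W_2$, and then quote Lemma~\ref{lemma11}.

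\textbf{Step 1 (differentiate the bilinear map).} Since $\Phi(\varphi,\psi)=\psi\circ\varphi$ is bilinear in the pair $(\varphi,\psi)\in \mathrm{Hom}(K,\O^{\oplus n})\times \mathrm{Hom}(\O^{\oplus n},E)$, the usual product-rule argument for bilinear maps between finite-dimensional vector spaces gives, after identifying each tangent space with the ambient vector space,
\[
D\Phi_{(u,v)}(\varphi,\psi) \;=\; \psi\circ u + v\circ\varphi.
\]
In particular $D\Phi_{(u,v)} = \Psi$, where $\Psi$ is the map introduced just above the lemma.

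\textbf{Step 2 (identify the image with $W_1+W_2$).} Recall from the discussion following the long exact sequences in equation~\ref{eqn} that $W_1$ is the image of the composition map
\[
\mathrm{Hom}(K,\O^{\oplus n}) \longrightarrow \mathrm{Hom}(K,E), \qquad \varphi \longmapsto v\circ\varphi,
\]
while $W_2$ is the image of
\[
\mathrm{Hom}(\O^{\oplus n},E) \longrightarrow \mathrm{Hom}(K,E), \qquad \psi \longmapsto \psi\circ u.
\]
Taking $(\varphi,0)$ and $(0,\psi)$ as inputs to $\Psi=D\Phi_{(u,v)}$ shows that $W_1$ and $W_2$ are each contained in the image of $D\Phi_{(u,v)}$; conversely every $\Psi(\varphi,\psi) = v\circ\varphi + \psi\circ u$ lies in $W_1+W_2$. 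Hence
\[
\mathrm{image}(D\Phi_{(u,v)}) \;=\; W_1+W_2.
\]

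\textbf{Step 3 (conclude via Lemma~\ref{lemma11}).} By Lemma~\ref{lemma11}, $W_1$ and $W_2$ intersect transversely in $\mathrm{Hom}(K,E)$ if and only if $W_1+W_2=\mathrm{Hom}(K,E)$. Combining this with Step~2, transverse intersection is equivalent to $\mathrm{image}(D\Phi_{(u,v)}) = \mathrm{Hom}(K,E)$, i.e.\ to surjectivity of $D\Phi_{(u,v)}$. There is no real obstacle here: the only non-tautological input is the product rule for bilinear maps, and everything else is a direct rewording of what was set up immediately before the lemma statement.
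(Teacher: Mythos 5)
Your proof is correct and follows exactly the same route the paper takes: compute $D\Phi_{(u,v)} = \Psi$ by the product rule for bilinear maps, observe from the long exact sequences in equation~\ref{eqn} that the image of $\Psi$ is precisely $W_1 + W_2$, and invoke Lemma~\ref{lemma11}. The paper presents this only implicitly in the discussion preceding the lemma statement, so your write-up simply makes that implicit argument explicit.
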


We want to show that there exists a pair $(u,v)$ with $u$ injective, $v$ surjective, $v \circ u = 0$, and $D\Phi_{(u,v)}$ is surjective. Before we proceed to show this, we make a couple of observations.

\begin{propn}
The map $\Phi$ is surjective.
\end{propn}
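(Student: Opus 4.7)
The plan is to construct $\varphi$ and $\psi$ explicitly by exploiting the decomposition $n=r+(n-r)$, placing diagonal monomial sections in the first $r$ and last $n-r$ slots of $\O_{\P^1}^{\oplus n}$ and filling in the remaining blocks with Koszul solutions.

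First, I would view an element $F\in Hom(K,E)$ as an $r\times(n-r)$ matrix $(F_{ij})$ with $F_{ij}\in H^0(\O_{\P^1}(a_i+b_j))$. The sections $x^{a_i}\in H^0(\O(a_i))$ and $y^{b_j}\in H^0(\O(b_j))$ have disjoint zero loci on $\P^1$, so the Koszul complex
\[ 0 \rarrow \O_{\P^1}(-a_i-b_j) \rarrow \O_{\P^1}(-a_i)\oplus\O_{\P^1}(-b_j) \rarrow \O_{\P^1} \rarrow 0\]
is exact; twisting by $\O(a_i+b_j)$ and applying $H^0$, the vanishing $H^1(\O_{\P^1})=0$ implies that the multiplication map $H^0(\O(b_j))\oplus H^0(\O(a_i))\to H^0(\O(a_i+b_j))$ sending $(A,B)\mapsto x^{a_i}A+y^{b_j}B$ is surjective. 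So I can pick $A_{ij}\in H^0(\O(b_j))$ and $B_{ij}\in H^0(\O(a_i))$ satisfying $x^{a_i}A_{ij}+y^{b_j}B_{ij}=F_{ij}$. (Concretely, writing $F_{ij}=\sum_p c_p\, x^p y^{a_i+b_j-p}$, put the monomials with $p\geq a_i$ into $x^{a_i}A_{ij}$ and those with $p<a_i$ into $y^{b_j}B_{ij}$.)

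Then I would take
\[\psi=\bigl(\,\mathrm{diag}(x^{a_1},\ldots,x^{a_r})\;\big|\;(B_{ij})\,\bigr), \qquad \varphi=\begin{pmatrix}(A_{ij})\\ \mathrm{diag}(y^{b_1},\ldots,y^{b_{n-r}})\end{pmatrix},\]
matrices of sizes $r\times n$ and $n\times(n-r)$ respectively. The degree conditions on the entries are automatic (the $i$th row of $\psi$ has entries of degree $a_i$, and the $j$th column of $\varphi$ has entries of degree $b_j$), and a direct block computation gives $(\psi\circ\varphi)_{ij}=x^{a_i}A_{ij}+B_{ij}y^{b_j}=F_{ij}$, so $\Phi(\varphi,\psi)=F$.

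The argument is essentially routine once the block structure is seen; there is no serious obstacle. The decisive observation is the dimensional identity $n=r+(n-r)$, which leaves exactly $r$ slots for the $E$-diagonal and $n-r$ slots for the $K$-diagonal with no need for slot-sharing, and the only technical input is Koszul exactness together with $H^1(\O_{\P^1})=0$.
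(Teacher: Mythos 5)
Your construction is correct and is essentially the same as the paper's: both split an arbitrary $P_{i,j}$ of degree $a_i+b_j$ as $x^{a_i}R_{i,j}+Q_{i,j}y^{b_j}$ and then assemble the two factoring matrices by putting $\mathrm{diag}(x^{a_i})$ and the $Q$'s side by side in one factor and stacking the $R$'s over $\mathrm{diag}(y^{b_j})$ in the other, using the block decomposition $n=r+(n-r)$. The only difference is cosmetic: you justify the existence of the splitting via the Koszul sequence and $H^1(\O_{\P^1})=0$, whereas the paper simply asserts it (the monomial-splitting observation you also give is the one-line reason).
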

\begin{proof}
Let $P = (P_{i,j})_{r \times (n-r)}$ be an element of $Hom(K,E)$. We need to find elements $A \in Hom(K, \O^{\oplus n})$ and $B \in Hom(\O^{\oplus n},E)$ such that $P = A \circ B$. Clearly, $P_{i,j}$ is a homogeneous element of degree $a_i + b_j$ and hence, there exists homogeneous polynomials $R_{i,j}$ of degree $b_j$ and $Q_{i,j}$ of degree $a_i$ such that 
\[ P_{i,j} = x^{a_i} \cdot R_{i,j} + Q_{i,j} \cdot y^{b_j}\]

Consider the matrix $A = (A_{i,j})_{r \times n}$ and $B = (B_{i,j})_{n \times (n-r)}$ defined as follows : 
\[
A_{i,j} = \begin{cases}
x^{a_i},  \text{ if } i = j\\
Q_{i, j - r},  \text{ if } r+1 \leq j \leq n\\
0,  \text{ otherwise }
\end{cases} \qquad \qquad
B_{i,j} = \begin{cases}
R_{i,j}, & \text{ if } 1 \leq i \leq r \\
y^{b_j}, & \text{ if } i = r+j \\
0 , & \text{ otherwise }
\end{cases}
\]
Let $A_i$ denote the $i$th row of $A$ and $B_j$ denote the $j$th column of $B$. It follows from construction that $A_i \cdot B_j = x^{a_i}R_{i,j} + Q_{i,j}y^{b_j} = P_{i,j}$. Hence, $\Psi(A,B) = A \circ B = P$.
\end{proof}

\begin{propn}\label{balanced} When $K$ or $E$ is balanced, then $D\Phi_{(u,v)}$ is surjective.
\end{propn}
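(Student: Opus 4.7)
The plan is to exploit the long exact sequences in (\ref{eqn}) together with the observation that ``balancedness'' forces one of the $\operatorname{Ext}^1$ terms to vanish. By Lemma~\ref{lemma12}, surjectivity of $D\Phi_{(u,v)}$ is equivalent to $W_1 + W_2 = \operatorname{Hom}(K,E)$, and from the two long exact sequences in (\ref{eqn}) we have
\[
W_1 = \ker\bigl(\operatorname{Hom}(K,E) \to \operatorname{Ext}^1(K,K)\bigr), \qquad W_2 = \ker\bigl(\operatorname{Hom}(K,E) \to \operatorname{Ext}^1(E,E)\bigr).
\]
So it suffices to show that at least one of $\operatorname{Ext}^1(K,K)$ or $\operatorname{Ext}^1(E,E)$ vanishes when the corresponding bundle is balanced; in that case the full $\operatorname{Hom}(K,E)$ is itself one of $W_1, W_2$, and $W_1 + W_2 = \operatorname{Hom}(K,E)$ holds trivially.

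The key (and only) computation is thus the standard fact that for $d \in \mathbb{Z}$, $h^1(\mathbb{P}^1, \mathcal{O}(d)) = \max\{0, -d-1\}$. Applied to $E = \oplus_i \mathcal{O}(a_i)$, this gives
\[
\operatorname{Ext}^1(E,E) = \sum_{i,j} h^1(\mathcal{O}(a_j - a_i)) = \sum_{i,j} \max\{0, a_i - a_j - 1\},
\]
so if $E$ is balanced (i.e.\ $a_i - a_j \leq 1$ for all $i,j$), every summand vanishes and $\operatorname{Ext}^1(E,E) = 0$. The computation for $K = \oplus_j \mathcal{O}(-b_j)$ is entirely analogous and yields $\operatorname{Ext}^1(K,K) = \sum_{i,j} \max\{0, b_j - b_i - 1\}$, which is zero when $K$ is balanced.

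With these two vanishings in hand the proof is immediate: if $E$ is balanced then $W_2 = \operatorname{Hom}(K,E)$, and if $K$ is balanced then $W_1 = \operatorname{Hom}(K,E)$. In either scenario $W_1 + W_2 = \operatorname{Hom}(K,E)$, and Lemma~\ref{lemma12} finishes. There is no real obstacle here; the proposition is essentially a packaging of the vanishing of $\operatorname{Ext}^1$ for balanced bundles on $\mathbb{P}^1$, and its point is to serve as the base case that motivates the harder unbalanced construction that will presumably follow.
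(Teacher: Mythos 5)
Your proposal is correct and follows essentially the same approach as the paper: both arguments reduce to the vanishing of $\operatorname{Ext}^1(K,K)$ (resp.\ $\operatorname{Ext}^1(E,E)$) for balanced $K$ (resp.\ $E$), then read off surjectivity of $D\Phi_{(u,v)}$ from the long exact sequences in equation~(\ref{eqn}). The only cosmetic difference is that you compute the $\operatorname{Ext}^1$ vanishing via $h^1(\mathcal{O}(d)) = \max\{0,-d-1\}$ while the paper phrases it through Serre duality, and you route through the $W_1 + W_2 = \operatorname{Hom}(K,E)$ formulation of Lemma~\ref{lemma12} whereas the paper states surjectivity of $\operatorname{Hom}(K,\mathcal{O}^{\oplus n}) \to \operatorname{Hom}(K,E)$ directly; these are the same argument.
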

\begin{proof}
Let $K = \mathcal{O}(-b_1) \oplus \cdots \oplus \mathcal{O}(-b_{n-r}) $ is balanced. Then, we have 
\[ Ext^{1}(K,K) = H^1(\mathbb{P}^1, K^* \otimes K) = H^0(\mathbb{P}^1, K^* \otimes K \otimes \mathcal{O}(-2))^* \quad \text{ by Serre's duality} \]
Clearly,
\[ K^* \otimes K \otimes O(-2) = \oplus_{i,j} \mathcal{O}(b_i - b_j -2) \]
Since $K$ is balanced, $b_i - b_j - 2 < 0$ for all $1 \leq i,j \leq n-r$. Hence, $Ext^1(K,K) = 0$. It follows from exact sequence stated earlier (see equation \ref{eqn}) that the map 
\[ Hom(K, \mathcal{O}^{\oplus n}) \rarrow Hom(K,E) \] is surjective, and hence the map
\[ D\Phi_{(u,v)} : Hom(K, \mathcal{O}^{\oplus n}) \times Hom(\mathcal{O}^{\oplus n},E) \rarrow Hom(K,E) \] is also surjective. 

We argue similarly when $E$ is balanced.
\end{proof}

We now proceed to show that there exists a pair $(u,v)$ with $D\Phi_{(u,v)}$ is surjective. Before tackling the general case, we look at special case when $r = n-r = 2$. 

\begin{propn} When $n = 4$ and $r = 2$, then there exits a pair $(u,v)$ with $u$ injective, $v$ surjective, $v \circ u = 0$, and $D \Phi_{(u,v)}$ is surjective.
\end{propn}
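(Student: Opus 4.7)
The plan is to use the explicit pair $(u,v)$ already built in Proposition \ref{baby}, which provides $u$ injective, $v$ surjective, and $v\circ u=0$ for free. By Lemma \ref{lemma12} it therefore suffices to verify that the linear map
\[ D\Phi_{(u,v)} : Hom(K,\mathcal{O}^{\oplus 4}) \times Hom(\mathcal{O}^{\oplus 4},E) \rarrow Hom(K,E), \quad (\varphi,\psi)\mapsto \psi\circ u + v\circ\varphi, \]
is surjective.

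First, I would observe that if either $K$ or $E$ is balanced, Proposition \ref{balanced} concludes the argument at once. So I reduce to the genuinely unbalanced regime $a_1-a_2\geq 2$ and $b_2-b_1\geq 2$, in which (by the inequality proved at the start of Proposition \ref{baby}) one still has $a_1>b_1$ and consequently $a_2<b_2$.

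Next, I would parametrize $\varphi=[\varphi_1\mid\varphi_2]$ column-wise (with $\varphi_j$ a column of four homogeneous polynomials of degree $b_j$) and $\psi=\begin{pmatrix}\psi_1\\ \psi_2\end{pmatrix}$ row-wise (with $\psi_i$ a row of four homogeneous polynomials of degree $a_i$), then expand each block $(\psi u+v\varphi)_{ij}\in H^0(\mathcal{O}_{\mathbb{P}^1}(a_i+b_j))$ using the explicit entries of $u,v$ from Proposition \ref{baby}. Surjectivity of $D\Phi_{(u,v)}$ then reduces to a completely concrete linear-algebra claim: for any target $(P_{ij})\in\bigoplus_{i,j} H^0(\mathcal{O}(a_i+b_j))$, the resulting system in the polynomial coefficients of $\varphi,\psi$ admits a solution. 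I would verify this block by block, at each stage identifying the terms whose governing coefficients are unshared with the remaining blocks, and using the inequalities $a_1\geq b_1$ and $a_2\leq b_2$ to show that the $y$-exponent ranges produced by those unshared terms already span $H^0(\mathcal{O}(a_i+b_j))$, with any leftover monomials supplied by one shared-variable term whose compatibility is then checked.

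The main obstacle will be the bookkeeping of the four shared coefficient-pairs: the second entry of $\varphi_1$ is common to $P_{11}$ and $P_{21}$, the second entry of $\varphi_2$ is common to $P_{12}$ and $P_{22}$, the fourth entry of $\psi_1$ is common to $P_{11}$ and $P_{12}$, and the fourth entry of $\psi_2$ is common to $P_{21}$ and $P_{22}$. One cannot solve the four blocks independently, so the order must be chosen carefully; the cleanest route is to first use the shared entries to match the three smaller blocks $P_{21},P_{22},P_{11}$ and then argue, using the inequality $a_1\geq b_1$, that the remaining freedom in the unshared entries $\beta_1,\beta_4,\gamma_2,\gamma_3$ of $\varphi_2,\psi_1$ suffices to hit any polynomial of degree $a_1+b_2$ in the largest block $P_{12}$.
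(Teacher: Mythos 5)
You correctly reuse the pair $(u,v)$ from Proposition \ref{baby}, reduce via Lemma \ref{lemma12} (and dispatch the balanced case via Proposition \ref{balanced}), and you correctly identify the four shared entries $R_{21},R_{22},Q_{14},Q_{24}$ (in the notation of the paper, where $R\in Hom(K,\O^{\oplus 4})$ and $Q\in Hom(\O^{\oplus 4},E)$). However, the proposed solution order has a genuine gap. You plan to solve $P_{21},P_{22},P_{11}$ first (spending the shared entries) and then solve $P_{12}$ using only the unshared entries $R_{12},R_{42},Q_{12},Q_{13}$. This fails in exactly the regime you reduce to: when $a_1>b_1$, every monomial produced by
\[
x^{a_1}R_{12},\qquad x^{a_1-b_1}y^{b_1}R_{42},\qquad Q_{12}\,x^{a_1-b_1}y^{a_2},\qquad -Q_{13}\,x^{b_2}
\]
has $x$-exponent at least $\min\{a_1-b_1,\,b_2\}\geq 1$, so the coefficient of $x^{0}y^{a_1+b_2}$ in $P_{12}$ can never be matched by these four terms alone. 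Inspecting the expansion of $(vR+Qu)_{12}$, the only summands that can carry an $x$-exponent of $0$ are the shared ones $y^{a_1}R_{22}$ and $-Q_{14}y^{b_2}$; if both $R_{22}$ and $Q_{14}$ have already been fixed by the choices made for $P_{22}$ and $P_{11}$, then $P_{12}$ cannot be hit for generic target data, and the inequality $a_1\geq b_1$ you invoke actually works against you here rather than for you.

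The fix is to reserve $Q_{14}$ for the $(1,2)$ block, which forces $P_{12}$ to be solved \emph{before} $P_{11}$. This is precisely what the paper does: it solves in the order $P_{22},P_{21},P_{12},P_{11}$, so that at each step there is an unused $R_{i,j}$ (paired with $x^{a_i}$) and an unused $Q$-entry (paired with $-y^{b_j}$), and each block reduces to a two-term equation of the form $P-(\text{known}) = x^{a_i}R_{i,j} - Q_{\bullet,\bullet}y^{b_j}$, which is always solvable since $x^{a_i}R_{i,j}$ covers $x$-exponents $\geq a_i$ and $Q_{\bullet,\bullet}y^{b_j}$ covers $x$-exponents $\leq a_i$. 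Your plan can be repaired by observing that $P_{11}$ \emph{can} be solved without touching $Q_{14}$ (using $R_{11}$ and $Q_{11}$, which already span the needed degrees), but as written the final step of your argument does not go through.
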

\begin{proof}
Recall that in Proposition \autoref{baby}, we constructed a pair $(u,v)$ with $v$ surjective, $u$ injective, and $v \circ u = 0$. Let $P$ be an element of $Hom(K,E)$. We can think of $P$ as a $2 \times 2$ matrix $P = (P_{i,j}) $ whose $(i,j)$th entry $P_{i,j}$ is a homogeneous polynomial of degree $a_i + b_j$. 

We need to find a $4 \times 2$ matrix $R = (R_{i,j})$ and a $2 \times 4$ matrix $Q = (Q_{i,j})$, where $R_{i,j}$ has degree $b_j$ and $Q_{i,j}$ has degree $a_i$, which satisfies the equation 
\[ P = v \circ R + Q \circ u \]
Comparing the entries of the matrices, we get the following equations
\begin{align*}
&P_{1,1} = x^{a_1}R_{1,1} + y^{a_1}R_{2,1} + x^{a_1 - b_1}y^{b_1}R_{4,1} - Q_{1,1}y^{b_1} + Q_{1,4}x^{b_1} \\
&P_{1,2} = x^{a_1}R_{1,2} + y^{a_1}R_{2,2} + x^{a_1 - b_1}y^{b_1}R_{4,2} + Q_{1,2}x^{a_1 - b_1}y^{a_2} - Q_{1,3}x^{b_2} - Q_{1,4}y^{b_2} \\
&P_{2,1} = x^{a_2}R_{2,1} + y^{a_2}R_{3,1} - Q_{2,1}y^{b_1} + Q_{2,4}x^{b_1}\\
&P_{2,2} = x^{a_2} R_{2,2} + y^{a_2}R_{3,2} + Q_{2,2}x^{a_1 - b_1}y^{a_2} - Q_{2,3}x^{b_2} - Q_{2,4}y^{b_2}
\end{align*}
We solve these equations from bottom to top. First, set $R_{3,2}, Q_{2,2}, Q_{2,3}$ to be zero, and solve $R_{2,2}, Q_{2,4}$ for the equation $P_{2,2} = x^{a_2}R_{2,2} - Q_{2,4}y^{b_2}$. Then, set $R_{3,1} = 0$, and solve for $R_{2,1}, Q_{2,1}$ in the equation $P_{2,1} - Q_{2,4}x^{b_1} = x^{a_2}R_{2,1} - Q_{2,1}y^{b_1}$. Then, set $R_{4,2}, Q_{1,2}, Q_{1,3}$ to be zero, and solve for $R_{1,2},Q_{1,4}$ in the equation $P_{2,1} - y^{a_1}R_{2,2} = x^{a_1}R_{1,2} - Q_{1,4}y^{b_2}$. Finally, set $R_{4,1}=0$ and solve $R_{1,1}, Q_{1,1}$ in the equation $P_{1,1} - y^{a_1}R_{2,1} - Q_{1,4}x^{b_1} = x^{a_1}R_{1,1} - Q_{1,1}y^{b_1}$.

This shows that the map $D\Phi_{(u,v)}$ is surjective.
\end{proof}

We now proceed to the general case. 
\begin{propn}\label{gentrans}
Given any $n \geq 4$ and $2 \leq r \leq n-2$ satisfying $(n-r) \leq r$, there exists a pair $(u,v)$ with $u$ injective, $v$ surjective, $v \circ u = 0$, and $D\Phi_{(u,v)}$ is surjective.
\end{propn}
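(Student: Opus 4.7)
The plan is to use the explicit pair $(u,v)$ constructed in Proposition \ref{nonempty} and verify directly that $D\Phi_{(u,v)}$ is surjective; by Lemma \ref{lemma12} this gives the desired transversality. Concretely, given any $P = (P_{i,j}) \in Hom(K,E)$ with $P_{i,j}$ a homogeneous polynomial of degree $a_i + b_j$, I want to construct $\varphi \in Hom(K, \O^{\oplus n})$ and $\psi \in Hom(\O^{\oplus n}, E)$ satisfying $v\circ \varphi + \psi \circ u = P$ entrywise.

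The algebraic input is the elementary observation that every homogeneous polynomial $F$ of degree $a_i + b_j$ in $x,y$ admits a decomposition $F = x^{a_i} R + y^{b_j} Q$ for some $R$ of degree $b_j$ and $Q$ of degree $a_i$. Indeed, every monomial $x^p y^q$ with $p+q = a_i + b_j$ satisfies either $p \geq a_i$ (in which case it is absorbed into $R$) or $q \geq b_j$ (in which case it is absorbed into $Q$).

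Using this, I set most entries of $\varphi$ and $\psi$ to zero and process the entries of $P$ in the order $i = r, r-1, \ldots, 1$ (rows bottom-up) and, within each row, $j = n-r, n-r-1, \ldots, 1$ (columns right-to-left). At step $(i,j)$, the dominant ``diagonal'' contributions to $(v\varphi + \psi u)_{i,j}$ are $v_{i,i}\varphi_{i,j} = x^{a_i}\varphi_{i,j}$ and $\psi_{i, r+j} u_{r+j, j} = -y^{b_j}\psi_{i, r+j}$, while contributions from previously-determined entries (such as $y^{a_i}\varphi_{i+1,j}$ from $v_{i,i+1}$, set in a previous row, and $\pm x^{b_j}\psi_{i, r+j+1}$ from the subdiagonal of $u3$, set in a previous column of the same row) are treated as known constants. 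The residual equation $P'_{i,j} = x^{a_i}\varphi_{i,j} - y^{b_j}\psi_{i, r+j}$ is then solvable by the decomposition above.

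The main obstacle is keeping the bookkeeping consistent in the presence of the ``extra'' nonzero entries of $u$ and $v$ outside the diagonal block: the entries in rows $j_l + 1$ of $v$ occupying columns $r+2+i_l, \ldots, r+1+i_{l+1}$, and the entries of the blocks $u1, u2$ living in column $1$, in columns $i_l + 1$ for $1 \leq l \leq \alpha$, and in column $n-r$. These introduce couplings that threaten to overdetermine the system. My plan is to exploit the sparsity of these extras by setting the corresponding entries of $\varphi$ (those in the columns activated by the extras of $v$) and of $\psi$ (those in the rows activated by the extras of $u$) to zero from the start. With this reduction, the coupling across different $(i,j)$ positions disappears, and the bottom-up / right-to-left cascade reduces to a triangular sequence of independent equations of the form $P'_{i,j} = x^{a_i}\varphi_{i,j} - y^{b_j}\psi_{i, r+j}$, exactly mirroring the $r = n-r = 2$ baby case treated in the preceding proposition.
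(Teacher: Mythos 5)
Your overall plan — reuse the explicit pair $(u,v)$ from Proposition~\ref{nonempty}, process $P_{i,j}$ bottom-up (rows $r$ down to $1$) and right-to-left (columns $n-r$ down to $1$), and at each step peel off the $x^{a_i}$ and $y^{b_j}$ parts of the residual entry — is the same strategy the paper uses. The decomposition ``$F = x^{a_i}R + y^{b_j}Q$ for $F$ of degree $a_i+b_j$'' is exactly the elementary ingredient the paper relies on, and the triangularity you invoke (lower entries in a column of $v$ vanish below $x^{a_i}$; entries to the right of $-y^{b_j}$ in the relevant row of $u$ vanish) is the same structural observation.

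The gap is in your identification of the second ``diagonal'' variable and, more seriously, in your pre-zeroing scheme for the first column. You claim the $-y^{b_j}$ contribution to $(v\varphi + \psi u)_{i,j}$ is always $\psi_{i,r+j}\, u_{r+j,j}$, but by the paper's explicit construction of the $u3$ block one has $u_{r+1,1} = 0$: the entry $-y^{b_1}$ sits at $u_{1,1}$, inside the $u1$ block, not on the diagonal of $u3$. Your remedy for the ``extra'' entries — set to zero every column of $\psi$ whose index matches a row of $u$ touched by $u1$ or $u2$ — therefore annihilates precisely the variable $\psi_{i,1}$ that must carry the $y^{b_1}$ part of the residual. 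After your reduction, the $(i,1)$ equation collapses to $P'_{i,1} = x^{a_i}\varphi_{i,1}$, which has no solution unless $P'_{i,1}$ happens to be divisible by $x^{a_i}$. (You can see the issue already in the $r=n-r=2$ baby case worked in the paper, where $Q_{2,1}$ and $Q_{1,1}$ are both used as solving variables, not set to zero.) The fix is a more careful bookkeeping than the blanket zeroing you propose: treat $\psi_{i,1}$ as a genuine solving variable for column $j=1$, treat $\psi_{i,r+j}$ as the solving variable only for $j\geq 2$, set the remaining coupling entries of $\varphi$ and $\psi$ to zero only when they are encountered and not previously determined, and then verify — as the paper does — that each solving variable does not appear in any earlier equation because of the sparsity pattern of $u$ and $v$. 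The paper's argument is precisely this slightly more flexible version of your plan; with the $j=1$ variable corrected, your approach goes through.
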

\begin{proof}
Recall that we constructed matrices $v$ and $u$ in the paragraphs preceding Proposition \autoref{nonempty}, and proved that $v$ is surjective, $u$ is injective, and $v \circ u = 0$. We just need to show that $D\Phi_{(u,v)}$ is surjective for this pair $(u,v)$. 

Let $P$ be an element of $Hom(K,E)$. We can think of $P$ as $(P_{i,j})$ which is a $r \times (n-r)$ matrix with $P_{i,j}$ being a homogeneous polynomial of degree $a_i + b_j$. We need to show that there exits elements $R \in Hom(K, \O^{\oplus n})$ and $Q \in Hom(\O^{\oplus n}, E) $ such that $P = v \circ R + Q \circ u$. We can think of $R$ as $(R_{i,j})$ which is a $(n-r) \times n$ matrix with $R_{i,j}$ being homogeneous polynomial of degree $b_j$, and $Q = (Q_{i,j})$ a $r \times n$ matrix with $Q_{i,j}$ being homogeneous polynomial of degree $a_i$. 

Observe that by comparing both sides of equation $P = v \circ R + Q \circ u$, get that for any $i,j$, we have 
\[ P_{i,j} = x^{a_i} R_{i, j} - Q_{\alpha_i, \beta_j} y^{b_j} + \text{ other terms } \]
We try to solve these equations in the following order 
\[ P_{r,n-r}, \cdots, P_{r,1}, P_{r-1, n-r}, \cdots, P_{r-1,1}, \cdots, P_{1,n-r} , \cdots, P_{1,1} \]
in the following manner : 

Assume that all equations for $P_{i,j}$ where $i > i_0$, or $i = i_0$ and $j > j_0$ are solved. As mentioned earlier we have equation 
\[ P_{i_0, j_0} = x^{a_{i_0}} R_{i_0,j_0} - Q_{\alpha_{i_0}, \beta_{j_0}} y^{b_{j_0}} + \text{ other terms } \]
where the "other terms" has a bunch of $R_{\alpha,\beta}$'s and $Q_{\alpha',\beta'}$'s occurring in them, some of which are already determined in some previous equation, and some are not. If they are not determined, then set them to be $0$. Then we solve for $R_{i_0,j_0}$ and $Q_{\alpha_{i_0}, \beta_{j_0}}  $ in the equation 
\[ P_{i_0, j_0} - \text{ other terms } = x^{a_{i_0}} R_{i_0,j_0} - Q_{\alpha_{i_0}, \beta_{j_0}} y^{b_{j_0}}  \]

We claim that we can solve for all the equations $P_{r,n-r}, \cdots, P_{1,1}$ in aforementioned method. Suppose not, consider the first $P_{i_0,j_0}$ for which a conflict occurs. Only possible conflict at this step is that $R_{i_0,j_0}$ or $Q_{\alpha_{i_0},\beta_{j_0}}$ has been already determined at some previous step. But this is not possible, because by construction of the matrices $u$ and $v$, we have that in each column of $v$ in which $x^{a_i}$ appears, all the entries below $x^{a_i}$ in that column are $0$; similarly, in each row of $u$ in which $-y^{b_j}$ appears, all the entries to the right of $-y^{b_j}$ in that row are $0$; and hence, $R_{i_0, j_0}$ and $Q_{\alpha_{i_0}, \beta_{j_0}}$ does not appear in any of the previous equations.
\end{proof}

As a corollary, we get 
\begin{cor}\label{corollary17}
There exists an exact sequence 
\[ 0 \rarrow \oplus_{j=1}^{n-r}\,\mathcal{O}_{\P^1}(-b_j) \stackrel{u}{\rarrow} \mathcal{O}^{\oplus n} \stackrel{v}{\rarrow} \oplus_{i=1}^r \,\mathcal{O}_{\P^1}(a_i) \rarrow 0 \]
such that the loci $M(b_\bullet)$ and $M'(a_\bullet)$ intersect transversely at the morphism $\varphi_v$ corresponding to the surjection $v$.

In particular, the loci $M(b_\bullet)$ and $M'(a_\bullet)$ intersect generically transversely and has an irreducible component of codimension 
\[ \sum_{i,j} \max \left\lbrace a_i - a_j - 1, 0 \right\rbrace + \sum_{i,j} \max \left\lbrace b_i - b_j - 1, 0 \right\rbrace \]

Moreover, if either of the splitting type $\{ a_\bullet \}$ or $\{ b_\bullet \}$ is balanced, then the intersection is transverse.
\end{cor}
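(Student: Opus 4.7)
The plan is to package the technical results already proved in Sections \ref{nonemptylocussection} and \ref{transverseintersectionsection} into the three assertions of the corollary. First I would invoke Proposition \ref{gentrans}, which produces a specific pair $(u,v)$ such that $v$ is surjective, $u$ is injective, $v\circ u=0$, and the differential $D\Phi_{(u,v)}$ is surjective. By Lemma \ref{correspondence}, the surjection $v$ corresponds to a morphism $\varphi_v \in Mor_e(\P^1,\G)$; the exact sequence then forces $\varphi_v^*(\mathcal{S}^*)\cong \oplus_i \mathcal{O}_{\P^1}(a_i)$ and $\varphi_v^*(\mathcal{Q})\cong \oplus_j \mathcal{O}_{\P^1}(b_j)$, so $\varphi_v \in M(b_\bullet)\cap M'(a_\bullet)$.

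Next I would chain the reductions of Section \ref{transverseintersectionsection} in reverse. Lemma \ref{lemma12} converts the surjectivity of $D\Phi_{(u,v)}$ into the statement that the subspaces $W_1,W_2\subset Hom(K,E)$ span $Hom(K,E)$. Lemma \ref{lemma11} then translates this into transverse intersection of $W_1$ and $W_2$. Finally, Remark \ref{remark10} identifies this transversality inside $Hom(K,E)$ with the transverse intersection of $M(b_\bullet)$ and $M'(a_\bullet)$ at $\varphi_v$. This establishes the first assertion of the corollary.

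For the second assertion, the existence of even a single transverse point implies generic transversality of the intersection along the irreducible component through $\varphi_v$: transversality is an open condition on the smooth loci (Proposition \ref{codimensionlemma}), so a nonempty open set of $M(b_\bullet)\cap M'(a_\bullet)$ consists of transverse intersection points. By Proposition \ref{codimensionlemma}, $M(b_\bullet)$ and $M'(a_\bullet)$ are smooth of codimensions $\sum_{i,j}\max\{b_i-b_j-1,0\}$ and $\sum_{i,j}\max\{a_i-a_j-1,0\}$ respectively; hence at $\varphi_v$ the local branch of the intersection has codimension equal to the sum of these, yielding an irreducible component of the claimed codimension.

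For the final assertion, if either $E$ or $K$ is balanced, then Proposition \ref{balanced} shows that $D\Phi_{(u,v)}$ is surjective for \emph{every} admissible pair $(u,v)$, not just the one built in Proposition \ref{gentrans}. Running the equivalences of Lemmas \ref{lemma11}, \ref{lemma12} and Remark \ref{remark10} at an arbitrary point of $M(b_\bullet)\cap M'(a_\bullet)$ then shows that the intersection is transverse at every such point. I do not anticipate a real obstacle here: all the heavy lifting (the explicit construction of $(u,v)$ and the bookkeeping that proves $D\Phi_{(u,v)}$ is surjective) has been carried out in Proposition \ref{gentrans}, and the only remaining task is to thread together the established equivalences, which is essentially formal.
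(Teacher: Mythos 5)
Your proposal is correct and follows exactly the paper's argument: for the first assertion you chain Proposition \ref{gentrans} through Lemma \ref{lemma12}, Lemma \ref{lemma11}, and Remark \ref{remark10}; for the second you combine the first with Proposition \ref{codimensionlemma} via openness of transversality; and for the third you replace Proposition \ref{gentrans} with Proposition \ref{balanced}. This matches the paper's proof step for step, just with the intermediate reasoning spelled out more explicitly.
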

\begin{proof}
The first assertion of the corollary follows from Remark \ref{remark10}, Lemma \ref{lemma11}, Lemma \ref{lemma12}, and  Proposition \ref{gentrans}. 

The second assertion follows from the first one and Proposition \ref{codimensionlemma}.

The third assertion follows from Remark \ref{remark10}, Lemma \ref{lemma11}, Lemma \ref{lemma12}, and Proposition \ref{balanced}.
\end{proof}

In summary, it follows from Corollary \ref{corollary9} and \ref{corollary17} that 
\begin{thm}\label{maintheorem}
The intersection of the loci $M(b_\bullet)$ and $M'(a_\bullet)$ is nonempty and generically transverse. Furthermore, if either of the splitting types $\{ a_\bullet \}$ or $\{ b_\bullet \}$ is balanced, then the intersection is transverse.
\end{thm}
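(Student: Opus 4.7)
The plan is to assemble this theorem as a consolidation of the two preceding corollaries, so my proof is essentially a two-line citation; the substance lives in the lemmas that precede. Nonemptiness is handled by Corollary \ref{corollary9}: the explicit exact sequence
\[ 0 \rarrow \oplus_{j=1}^{n-r} \O_{\P^1}(-b_j) \stackrel{u}{\rarrow} \O_{\P^1}^{\oplus n} \stackrel{v}{\rarrow} \oplus_{i=1}^r \O_{\P^1}(a_i) \rarrow 0 \]
constructed in Proposition \ref{nonempty} via the block matrices with monomial entries corresponds, under Lemma \ref{correspondence}, to a morphism $\varphi_v$ lying simultaneously in $M(b_\bullet)$ and $M'(a_\bullet)$. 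Reducing by the duality $G(r,n) \cong G(n-r,n)$ I may assume $(n-r) \leq r$ from the outset, so no separate argument is needed in the other range.

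For generic transversality I would invoke Corollary \ref{corollary17}. The workflow there is: identify the tangent spaces of $M(b_\bullet)$ and $M'(a_\bullet)$ at $\varphi_v$ as the subspaces $W_1, W_2 \subset \mathrm{Hom}(K,E)$ arising from the two long exact sequences in equation \eqref{eqn} (Remark \ref{remark10}); translate transversality into the condition $W_1 + W_2 = \mathrm{Hom}(K,E)$ (Lemma \ref{lemma11}); and then recognize this spanning condition as the surjectivity of the differential $D\Phi_{(u,v)}$ of the composition map $\Phi(\varphi,\psi) = \psi \circ \varphi$ (Lemma \ref{lemma12}). The surjectivity itself is the content of Proposition \ref{gentrans}, verified by solving the matrix equation $P = v \circ R + Q \circ u$ entry-by-entry in the order $P_{r,n-r}, \ldots, P_{1,1}$; the key structural fact making this greedy procedure work is that in every column of $v$ where $x^{a_i}$ appears, all entries below are zero, and symmetrically for the rows of $u$ containing $-y^{b_j}$. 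Since the two loci are smooth of the expected codimensions (Proposition \ref{codimensionlemma}) and transversality at a single point is an open condition along the intersection, this upgrades to generic transversality everywhere.

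The final clause, transversality \emph{throughout} the intersection when either splitting type is balanced, again follows from Corollary \ref{corollary17}, and its proof reduces to Proposition \ref{balanced}. The crucial observation there is a Serre-duality computation: if $K$ is balanced, then $b_i - b_j - 2 < 0$ for all $i,j$, forcing $\mathrm{Ext}^1(K,K) = H^0(\P^1, K^\ast \otimes K \otimes \O(-2))^\ast = 0$; dually when $E$ is balanced. Under either vanishing, one of the two right-hand maps in \eqref{eqn} is surjective on the nose, so $D\Phi_{(u,v)}$ is surjective \emph{at every} $(u,v)$ realizing the splitting type, not just at the one built in Proposition \ref{nonempty}.

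The main obstacle in this whole program is not the present theorem at all, but rather sits upstream in Propositions \ref{nonempty} and \ref{gentrans}: engineering a pair $(u,v)$ simultaneously achieving (i) surjectivity of $v$, (ii) injectivity of $u$, (iii) $v \circ u = 0$, and (iv) the upper-triangular sparsity pattern that permits the greedy solution of $P = v \circ R + Q \circ u$. Once the combinatorics of the sequences $\{i_l\}, \{j_l\}$ tracking the interlacing of the partial sums $A(j)$ and $B(i)$ (Lemma \ref{order}) is in place, the theorem itself compiles as a one-line synthesis of Corollaries \ref{corollary9} and \ref{corollary17}.
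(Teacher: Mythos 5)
Your proposal matches the paper exactly: the theorem is assembled from Corollary \ref{corollary9} (nonemptiness) and Corollary \ref{corollary17} (generic transversality, with the balanced case handled via Proposition \ref{balanced}), and your summary of the supporting machinery in Remark \ref{remark10}, Lemmas \ref{lemma11}--\ref{lemma12}, and Propositions \ref{nonempty} and \ref{gentrans} is accurate.
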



\section{\normalfont\textsc{Analyzing the locus with restricted tangent bundle having fixed splitting type}}\label{section5} 
In this section, we are going to show that the locus of morphisms in $Mor_e(\P^1, \G)$ with the restricted tangent bundle having fixed splitting type need not always be irreducible. This is in sharp contrast with the results of Verdier \cite{verdier} and Ramella \cite{ramella}, who  have shown that given a collection of integers $a_1, \cdots, a_n$ with $a_1 \geq \cdots \geq a_n$ and $\sum_{i=1}^n a_i = e$, the locus of morphisms $\varphi$ in $Mor_e(\P^1, \P^n)$ with the restricted twisted tangent bundle $\varphi^*(T_{\P^n}(-1))$ having splitting type $(a_1, \cdots, a_n)$ is empty if $a_n <0$, else it is nonempty, smooth and connected of codimension 
\[ \sum_{i,j} \max \{ a_i - a_j - 1, 0 \} \]

Recall that given a morphism $\varphi : \P^1 \rarrow G(r,n)$, the restricted tangent bundle $\varphi^* (T_{G(r,n)})$ is isomorphic to $\varphi^* (\mathcal{S}^*) \otimes \varphi^*(\mathcal{Q})$, where $\mathcal{S}$ and $\mathcal{Q}$ are the universal sub-bundle and universal quotient bundle of $\G$. Now let us fix a splitting type $c_1, \cdots, c_{r(n-r)}$ for the restricted tangent bundle $\varphi^*(T_{\G})$. We define 
\begin{defn}\label{filling}
A \textit{filling} for the splitting type $\lbrace c_l \rbrace_{1 \leq l\leq r(n-r)}$ to be a $r \times (n-r)$ matrix $A$ with entries $a_{i,j} = c_l$ for some $l$ depending on $i,j$ such that 
\begin{enumerate}[$\bullet$]
\item For all $1 \leq i \leq r-1$ and $1 \leq j \leq n-r-1$, we have $a_{i,j} \leq a_{i+1,j}$ and $a_{i,j} \leq a_{i,j+1}$.
\item For all $1 \leq i \leq r-1$ we have $a_{i,n-r} \leq a_{i+1, n-r}$, and for all $1 \leq j \leq n-r-1$ we have $a_{r,j} \leq a_{r,j+1}$.
\item For all $1 \leq i \leq r-1$ the difference $a_{i+1,j} - a_{i,j}$ is independent of $j$, and for all $1 \leq j\leq n-r-1$ the difference $a_{i,j+1} - a_{i,j}$ is independent of $i$.
\end{enumerate}
\end{defn}
Moreover, we define 
\begin{defn}
A collection of integers $\alpha_1, \cdots, \alpha_\nu$ is \textit{non-negative} if $\alpha_i$ are non-negative integers for all $1 \leq i \leq \nu$. A collection of integers $\alpha_1, \cdots, \alpha_\nu$ is \textit{increasing} if $\alpha_1 \leq \cdots \leq \alpha_\nu$.
\end{defn}

The exigency of these definitions is due to the following Lemma. 
\begin{lem}\label{fillingtosplitting}
A filling for the splitting type $\lbrace c_l \rbrace_{1 \leq l \leq r(n-r)}$ uniquely determines the non-negative increasing splitting type of $\varphi^*(\mathcal{S}^*)$ and $\varphi^*(\mathcal{Q})$.
\end{lem}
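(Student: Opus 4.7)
The plan is to exploit the third condition on the filling to obtain an additive decomposition $a_{i,j} = \alpha_i + \beta_j$, then fix the shift ambiguity via the degree normalization $\sum a_i = \sum b_j = e$.

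First I would unpack condition (3). Setting $d_i := a_{i+1,j} - a_{i,j}$ (independent of $j$) and $\delta_j := a_{i,j+1} - a_{i,j}$ (independent of $i$), a telescoping argument yields
\[ a_{i,j} \;=\; a_{1,1} + \sum_{k<i} d_k + \sum_{l<j} \delta_l, \]
so that $a_{i,j} = \alpha_i + \beta_j$ where $\alpha_i := \sum_{k<i} d_k$ and $\beta_j := a_{1,1} + \sum_{l<j} \delta_l$. Conditions (1) and (2) force $d_i, \delta_j \geq 0$, giving $\alpha_1 \leq \cdots \leq \alpha_r$ and $\beta_1 \leq \cdots \leq \beta_{n-r}$.

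Since $\varphi^*(T_\G) \cong \varphi^*(\S^*) \otimes \varphi^*(\Q)$, the multiset $\{c_l\}$ of the filling equals $\{\alpha_i + \beta_j\}$, and any splitting $\varphi^*(\S^*) = \bigoplus \O(a_i)$, $\varphi^*(\Q) = \bigoplus \O(b_j)$ matching the filling must satisfy $a_i + b_j = \alpha_i + \beta_j$. By elementary linear algebra this forces $a_i = \alpha_i + c$, $b_j = \beta_j - c$ for a single constant $c$. Summing the identity $(n-r)\sum_i a_i + r \sum_j b_j = \sum_{i,j} a_{i,j} = \sum_l c_l$ and using $\sum_i a_i = \sum_j b_j = e$ recovers $e = (\sum_l c_l)/n$, after which the equation $\sum_i a_i = e$ pins $c$ down uniquely as $c = (e - \sum_i \alpha_i)/r$. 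Non-negativity of the resulting $a_i$ and $b_j$ is automatic, since $\varphi^*(\S^*)$ and $\varphi^*(\Q)$ are both quotients of $\O_{\P^1}^{\oplus n}$, hence globally generated with all summands of non-negative degree.

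The conceptual content of the argument is that condition (3) is precisely the combinatorial shadow of the tensor product decomposition $T_\G \cong \S^* \otimes \Q$; once this is internalized, everything reduces to linear algebra on additive rank-one matrices. The only point I would be careful about is integrality of $c$, which is automatic as soon as one knows the $c_l$'s come from a genuine morphism and hence admit some $(a_i, b_j) \in \mathbb{Z}_{\geq 0}^r \times \mathbb{Z}_{\geq 0}^{n-r}$ as a decomposition; uniqueness then says this decomposition is the one produced by our formulas.
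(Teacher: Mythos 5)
Your argument is correct and follows essentially the same route as the paper. The paper goes straight to the recovery formulas
\[
e = \frac{1}{n}\sum_{i,j} a_{i,j}, \qquad a_i = \frac{1}{n-r}\Bigl( \sum_{j} a_{i,j} - e \Bigr), \qquad b_j = \frac{1}{r}\Bigl( \sum_{i} a_{i,j} - e \Bigr),
\]
which is the same computation you carry out after first making explicit the telescoping consequence of condition (3) (namely $a_{i,j} = \alpha_i + \beta_j$) and then normalizing the shift $c$ by $\sum_i a_i = e$. Your version is a touch more transparent about \emph{why} the additive rank-one structure holds and why the answer is unique modulo the shift, whereas the paper's formulas silently encode that fact; both establish the lemma. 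Your remark on non-negativity via global generation is a fine observation, though the lemma as stated only asserts uniqueness of the non-negative increasing decomposition, so it is not strictly needed.
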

\begin{proof}
Let $\varphi^*(\mathcal{S}^*)$ be isomorphic to $\oplus_{i=1}^r \,\mathcal{O}_{\P^1}(a_i)$, and let $\varphi^*(\mathcal{Q})$ be isomorphic to $\oplus_{j=1}^{n-r} \,\mathcal{O}_{\P^1}(b_j)$. We can determine the $a_i$'s and $b_j$'s uniquely by the following equations
\begin{align*}
e &= \frac{1}{n}\sum_{i,j} a_{i,j}  \\
a_i &= \frac{1}{n-r}\left( \sum_{j=1}^{n-r} a_{i,j} - e \right) \qquad \text{ for all } 1 \leq i \leq r \\
b_j &= \frac{1}{r}\left( \sum_{i=1}^r a_{i,j} - e \right) \qquad \qquad\text{ for all } 1 \leq j \leq n-r
\end{align*}

Conversely, given a splitting type $\{a_\bullet\}$ for $\varphi^*(\mathcal{S}^*)$ and $\{b_\bullet\}$ for $\varphi^*(\mathcal{Q})$ with $0 \leq a_1 \leq \cdots \leq a_r$ and $0 \leq b_1 \leq \cdots \leq b_{n-r}$, we define a filling whose $(i,j)$th entry is $a_i + b_j$.
\end{proof}

Let $\{a_\bullet \}$ and $\{a'_\bullet \}$ be two non-negative increasing splitting types for $\varphi^*(\mathcal{S}^*)$, and let $\{ b_\bullet \}$ and $\{ b'_\bullet \}$ be two non-negative increasing splitting types for $\varphi^*(\mathcal{Q})$. If $\{ a_\bullet \}$ is different from $\{a'_\bullet \}$ (i.e. the corresponding vector bundles are not isomorphic) or $\{b_\bullet\}$ is different from $\{b'_\bullet\}$, then the intersection of loci $M(b_\bullet) \cap M'(a_\bullet)$ and $M(b'_\bullet)\cap M'(a'_\bullet)$ must be empty because a morphism $\varphi : \P^1 \rarrow \G$ uniquely determines the splitting type for $\varphi^*(\mathcal{S}^*)$ and $\varphi^*(\mathcal{Q})$. Hence, it follows from Lemma \ref{fillingtosplitting} that 
\begin{propn}\label{proposition22}
The locus of morphisms $\varphi$ in $Mor_e(\P^1, G(r,n))$ with the restricted tangent bundle having the splitting type $\{c_l \}_{1\leq l \leq r(n-r)}$ is stratified by the loci $M(b_\bullet) \cap M'(a_\bullet)$ where $\{a_\bullet\}$ and $\{b_\bullet \}$ are non-negative increasing splitting types for $\varphi^*(\mathcal{S}^*)$ and $\varphi^*(\mathcal{Q})$ arising from the distinct fillings for $\{c_l \}_{1 \leq l \leq r(n-r)}$.
\end{propn}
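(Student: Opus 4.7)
The plan is to show two things: first, that every morphism whose restricted tangent bundle has splitting type $\{c_l\}_{1 \leq l \leq r(n-r)}$ lies in some stratum $M(b_\bullet) \cap M'(a_\bullet)$ arising from a filling of $\{c_l\}$; and second, that distinct fillings correspond to disjoint strata. The stratification will then follow from the combination of these.

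First I would start with an arbitrary morphism $\varphi \in Mor_e(\P^1, \G)$ with $\varphi^*(T_\G)$ having splitting type $\{c_l\}$. By Grothendieck's theorem, $\varphi^*(\mathcal{S}^*)$ and $\varphi^*(\mathcal{Q})$ each have unique non-negative increasing splitting types $\{a_\bullet\}$ and $\{b_\bullet\}$ (non-negative because they are globally generated on $\P^1$, as each is a quotient of a trivial bundle). Using the isomorphism $\varphi^*(T_\G) \cong \varphi^*(\mathcal{S}^*) \otimes \varphi^*(\mathcal{Q})$, the splitting type of the tangent bundle is exactly the multiset $\{a_i + b_j\}_{1 \leq i \leq r,\; 1 \leq j \leq n-r}$. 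I would then assemble these into an $r \times (n-r)$ matrix $A$ with $a_{i,j} := a_i + b_j$.

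Next I would verify that $A$ is a filling for $\{c_l\}$ in the sense of Definition \ref{filling}. The row-wise and column-wise monotonicity $a_{i,j} \leq a_{i+1,j}$ and $a_{i,j} \leq a_{i,j+1}$ follow immediately from $a_i \leq a_{i+1}$ and $b_j \leq b_{j+1}$. The constancy property is equally immediate: $a_{i+1,j} - a_{i,j} = a_{i+1} - a_i$ is independent of $j$, and symmetrically $a_{i,j+1} - a_{i,j} = b_{j+1} - b_j$ is independent of $i$. This shows $\varphi \in M(b_\bullet) \cap M'(a_\bullet)$ for the filling $A$.

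Finally, I would argue disjointness. Given two distinct fillings of $\{c_l\}$, Lemma \ref{fillingtosplitting} yields distinct pairs of splitting types (either the $\{a_\bullet\}$'s differ or the $\{b_\bullet\}$'s differ). Since the splitting types of $\varphi^*(\mathcal{S}^*)$ and $\varphi^*(\mathcal{Q})$ are intrinsic invariants of the morphism $\varphi$, two strata $M(b_\bullet) \cap M'(a_\bullet)$ and $M(b'_\bullet) \cap M'(a'_\bullet)$ associated to different fillings must be disjoint, as noted in the paragraph preceding the proposition. Combining disjointness with the covering property above produces the desired stratification. No step here is a serious obstacle, as everything reduces to unpacking the tensor product formula for the tangent bundle, the uniqueness of Grothendieck decompositions, and Lemma \ref{fillingtosplitting}.
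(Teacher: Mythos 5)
Your proposal is correct and follows essentially the same route as the paper: the paper establishes disjointness of the strata in the paragraph immediately before the proposition (splitting types of $\varphi^*(\mathcal{S}^*)$ and $\varphi^*(\mathcal{Q})$ are intrinsic to $\varphi$, so different pairs give disjoint loci) and then cites Lemma~\ref{fillingtosplitting} for the covering direction. You have simply unpacked the covering step explicitly---verifying that the matrix $(a_i + b_j)$ satisfies the monotonicity and constancy conditions of Definition~\ref{filling} and that its entries recover the multiset $\{c_l\}$ via $\varphi^*(T_{\G}) \cong \varphi^*(\mathcal{S}^*)\otimes\varphi^*(\mathcal{Q})$---which the paper leaves implicit.
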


Recall that given a collection of non-negative increasing integers $\alpha_1, \cdots, \alpha_\nu$, we defined its \textit{polygonal line} (see Definition \ref{polygon}) to be $\mathfrak{P}(\alpha_\bullet) = (\alpha_\nu, \alpha_\nu + \alpha_{\nu-1}, \cdots, \alpha_\nu + \cdots + \alpha_1)$.
\begin{defn}
We say a filling $\lbrace a_{i,j} \rbrace_{{1\leq i \leq r, \, 1 \leq j \leq n-r}}$ of a splitting type $\lbrace c_l \rbrace_{1 \leq l \leq r(n-r)}$ to be \textit{minimal} if the following holds:

Let $a_1, \cdots, a_r$ be the non-negative increasing splitting type of $\varphi^*(\mathcal{S}^*)$ and $b_1 , \cdots, b_{n-r}$ be the non-negative increasing splitting type of $\varphi^*(\mathcal{Q})$ uniquely determined by the filling (see Lemma \ref{fillingtosplitting}). Then for every possible non-negative increasing collection of integers $a'_1, \cdots, a'_r$ and $b'_1, \cdots, b'_{n-r}$ with $a'_1 + \cdots + a'_r = b'_1 + \cdots + b'_{n-r} = e$ satisfying $\mathfrak{P}(a'_\bullet) \geq \mathfrak{P}(a_\bullet)$ and $\mathfrak{P}(b'_\bullet) \geq  \mathfrak{P}(b_\bullet)$ with atleast one of the inequality being strict, the matrix $\lbrace a'_i + b'_j \rbrace_{{1\leq i \leq r, \, 1 \leq j \leq n-r}}$ is not a filling for $\lbrace c_l \rbrace$.
\end{defn}

It follows as a consequence of Corollary \ref{mclosure2} that 
\begin{lem}\label{lemmamclosure}Suppose $\lbrace a_\bullet \rbrace$ and $\lbrace b_\bullet \rbrace$ be the non-negative increasing splitting types for $\varphi^*(\mathcal{S})^*$ and $\varphi^*(\mathcal{Q})$ respectively arising from a minimal filling of a given splitting $\lbrace c_l \rbrace$, then the loci $M(b_\bullet) \cap M'(a_\bullet)$ is closed in the locus of all degree $e$ morphisms with restricted tangent bundle having splitting type $\lbrace c_l \rbrace$.
\end{lem}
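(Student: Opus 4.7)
The plan is to unpack the statement as follows: to show $M(b_\bullet) \cap M'(a_\bullet)$ is closed inside the locus $T(c_\bullet)$ of morphisms with restricted tangent bundle of splitting type $\{c_l\}$, I would take an arbitrary $\varphi' \in \overline{M(b_\bullet) \cap M'(a_\bullet)} \cap T(c_\bullet)$ and show it actually lies in $M(b_\bullet) \cap M'(a_\bullet)$. Denote by $\{a'_\bullet\}$ and $\{b'_\bullet\}$ the non-negative increasing splitting types of $\varphi'^*(\mathcal{S}^*)$ and $\varphi'^*(\mathcal{Q})$; the goal is to conclude $\{a'_\bullet\} = \{a_\bullet\}$ and $\{b'_\bullet\} = \{b_\bullet\}$.

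The first key observation is that because $\varphi'^*(T_{\G}) \cong \varphi'^*(\mathcal{S}^*) \otimes \varphi'^*(\mathcal{Q})$, its splitting type is precisely $\{a'_i + b'_j\}_{i,j}$. Since this must agree with $\{c_l\}$ (the hypothesis that $\varphi' \in T(c_\bullet)$), the $r \times (n-r)$ matrix with $(i,j)$-entry $a'_i + b'_j$ is itself a filling for $\{c_l\}$, by the converse construction given in the proof of Lemma \ref{fillingtosplitting}.

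Next, from $\varphi' \in \overline{M(b_\bullet) \cap M'(a_\bullet)} \subset \overline{M(b_\bullet)} \cap \overline{M'(a_\bullet)}$, Corollary \ref{mclosure2} yields the pair of inequalities $\mathfrak{P}(a'_\bullet) \geq \mathfrak{P}(a_\bullet)$ and $\mathfrak{P}(b'_\bullet) \geq \mathfrak{P}(b_\bullet)$. If at least one of these were strict, then by the very definition of a minimal filling applied to $\{a_\bullet\}, \{b_\bullet\}$ (with the candidate $\{a'_\bullet\}, \{b'_\bullet\}$), the matrix $(a'_i + b'_j)$ would not be a filling for $\{c_l\}$, contradicting the previous paragraph. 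Hence both inequalities are equalities, and since a polygonal line determines the underlying non-negative increasing sequence uniquely, this forces $\{a'_\bullet\} = \{a_\bullet\}$ and $\{b'_\bullet\} = \{b_\bullet\}$, so $\varphi' \in M(b_\bullet) \cap M'(a_\bullet)$.

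There is no serious obstacle here: the argument is essentially a direct combination of the closure description from Corollary \ref{mclosure2}, the bijection between fillings and pairs of splitting types from Lemma \ref{fillingtosplitting}, and the minimality hypothesis, each used exactly once. The only mild subtlety is verifying that the filling axioms in Definition \ref{filling} are automatically met by $(a'_i + b'_j)$ whenever $\{a'_\bullet\}, \{b'_\bullet\}$ are non-negative increasing sequences, which is immediate from the monotonicity of $a'_i$ in $i$ and $b'_j$ in $j$ and the separable form of the entries.
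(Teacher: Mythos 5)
Your argument is correct and is essentially the proof the paper has in mind: it applies Corollary \ref{mclosure2} to the inclusion $\overline{M(b_\bullet) \cap M'(a_\bullet)} \subset \overline{M(b_\bullet)} \cap \overline{M'(a_\bullet)}$, identifies the matrix $(a'_i + b'_j)$ as a filling via Lemma \ref{fillingtosplitting}, and then invokes minimality to force equality of the polygonal lines and hence of the splitting types. The paper itself gives no explicit proof, simply stating the lemma "follows as a consequence of Corollary \ref{mclosure2}"; your write-up fills in exactly the intended steps.
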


Hence, we see that
\begin{cor}\label{corollary23}
The number of irreducible components of the locus of degree $e$ morphisms from $\mathbb{P}^1$ to $G(r,n)$ with the restricted tangent bundle having a given splitting type is bounded below by the number of distinct minimal fillings of the given splitting type. In particular, this locus need not always be irreducible.
\end{cor}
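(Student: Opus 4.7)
I would attach to each minimal filling a distinct irreducible component of the locus $L$ of degree $e$ morphisms whose restricted tangent bundle has splitting type $\lbrace c_l \rbrace$. Enumerate the distinct minimal fillings as $(a^{(k)}_\bullet, b^{(k)}_\bullet)$ for $k = 1, \ldots, N$, and set $S_k = M(b^{(k)}_\bullet) \cap M'(a^{(k)}_\bullet)$. By Theorem \ref{maintheorem} and Corollary \ref{corollary17}, each $S_k$ is nonempty and has an irreducible component $X_k$ of codimension $d_k = \sum_{i,j}\max\lbrace a^{(k)}_i - a^{(k)}_j - 1, 0\rbrace + \sum_{i,j}\max\lbrace b^{(k)}_i - b^{(k)}_j - 1, 0\rbrace$ in $Mor_e(\P^1, \G)$. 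By Lemma \ref{lemmamclosure} it is closed in $L$, so $X_k$ is a closed irreducible subvariety of $L$ and therefore lies inside a unique irreducible component $W_k$ of $L$.

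The central claim is then $W_k \ne W_{k'}$ for $k \ne k'$. Assume for contradiction that $W_k = W_{k'}$, and let $S_\gamma$ be the unique generic stratum of this common irreducible component. From $W_k \subset \overline{S_\gamma}^M$ and $X_k, X_{k'} \subset W_k$, Corollary \ref{mclosure2} yields $\mathfrak{P}(a^{(k)}_\bullet) \ge \mathfrak{P}(a^\gamma_\bullet)$, $\mathfrak{P}(b^{(k)}_\bullet) \ge \mathfrak{P}(b^\gamma_\bullet)$, and the analogous inequalities for $k'$. If $\gamma = k$, then $W_k \cap S_k$ is open dense in $W_k$ and irreducible (being open in the irreducible $W_k$), so it must coincide with $X_k$ (the unique irreducible component of $S_k$ through which it passes); hence $W_k = X_k \subset S_k$, and then $X_{k'} \subset W_{k'} = W_k \subset S_k$ contradicts the disjointness of distinct strata from Proposition \ref{proposition22}. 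Symmetrically $\gamma \ne k'$.

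The remaining case is $\gamma \notin \lbrace k, k'\rbrace$, which forces a third filling whose polygon data is strictly dominated from below (in at least one coordinate) by those of both $(a^{(k)}_\bullet, b^{(k)}_\bullet)$ and $(a^{(k')}_\bullet, b^{(k')}_\bullet)$. Excluding the existence of such a common lower-bound filling is the main obstacle. I would attempt it by exploiting the rigidity of the identity $\lbrace a_i + b_j \rbrace = \lbrace c_l \rbrace$ together with the minimality hypothesis: the idea is that any hypothetical $\gamma$ strictly below both minimal fillings ought to admit a Harder-Narasimhan style refinement in the direction of one of them, producing a strictly larger polygon filling of $\lbrace c_l \rbrace$ beyond $(a^{(k)}_\bullet, b^{(k)}_\bullet)$ or $(a^{(k')}_\bullet, b^{(k')}_\bullet)$ and contradicting its minimality. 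Once this combinatorial step is in place, the $N$ distinct irreducible components $W_1, \ldots, W_N$ deliver the claimed lower bound, and the in-particular statement follows by exhibiting any splitting type with $N \ge 2$, such as the $G(2,4)$ example mentioned in the introduction.
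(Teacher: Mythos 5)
Your strategy mirrors the paper's one\nobreakdash-sentence proof exactly: attach to each minimal filling $k$ the transverse component $X_k$ of $S_k = M(b^{(k)}_\bullet) \cap M'(a^{(k)}_\bullet)$ furnished by Corollaries~\ref{corollary9} and~\ref{corollary17}, use closedness of $S_k$ in $L$ (Lemma~\ref{lemmamclosure}) and the disjoint stratification (Proposition~\ref{proposition22}), and argue that the irreducible components $W_k$ of $L$ containing the $X_k$ are pairwise distinct. Your dispatch of the cases $\gamma \in \lbrace k, k' \rbrace$ via closedness is sound, and you are right that the remaining case is the crux; but the fix you sketch will not close it. Nothing in the definition of a minimal filling prevents a third filling $\gamma$ with $\mathfrak{P}(a^{(\gamma)}_\bullet) \leq \mathfrak{P}(a^{(k)}_\bullet)$, $\mathfrak{P}(b^{(\gamma)}_\bullet) \leq \mathfrak{P}(b^{(k)}_\bullet)$ and likewise for $k'$, with strict inequality somewhere: ``minimal'' forbids fillings \emph{dominating} $k$, not fillings that $k$ dominates, so your proposed ``HN\nobreakdash-style refinement'' of $\gamma$ toward $k$ would at best yield fillings intermediate between $\gamma$ and $k$ --- all still dominated by $k$ --- and contradicts nothing. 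Moreover, even nonexistence of such a $\gamma$ would not finish the job on its own: Corollary~\ref{mclosure2} describes $\overline{M(b_\bullet)}$ and $\overline{M'(a_\bullet)}$ separately, but $\overline{S_\gamma}$ is the closure of the \emph{intersection} and can be strictly smaller than $\overline{M(b^{(\gamma)}_\bullet)} \cap \overline{M'(a^{(\gamma)}_\bullet)}$, so the polygonal inequalities are only a necessary, not a sufficient, condition for $X_k \subset \overline{S_\gamma}$.

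For the record, the paper's own proof is the single sentence citing Proposition~\ref{proposition22}, Lemma~\ref{lemmamclosure}, and Corollaries~\ref{corollary9}, \ref{corollary17}; it does not make this step explicit either, so you have located a genuine thin spot. The missing ingredient is local rather than combinatorial. At the transversality point $x \in X_k$ produced in Corollary~\ref{corollary17}, the natural map $Ext^1(E,E) \oplus Ext^1(K,K) \rightarrow Ext^1(E \otimes K^*, E \otimes K^*)$ is injective, since on $\P^1$ the images of the two summands can meet only in diagonal $H^1(\O_{\P^1})$\nobreakdash-pieces, which vanish; hence the Zariski tangent space to $L$ at $x$ is cut out by the same condition as that of the stratum $S_k$, and transversality gives it dimension $\dim M - d_k = \dim X_k$. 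Thus $X_k$ is itself a full irreducible component of $L$, and distinctness of components follows at once from disjointness of the strata --- no reasoning about a hypothetical generic stratum $\gamma$ is needed at all.
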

\begin{proof}
The proof follows from Proposition \ref{proposition22} and Lemma \ref{lemmamclosure}, Corollary \ref{corollary9} and \ref{corollary17}.
\end{proof}

For example, let $r =2$, $n=4$ and $e=6$. The locus $Mor_6(\mathbb{P}^1, G(2,4))$ of degree $6$ morphisms from $\mathbb{P}^1$ to $G(2,4)$ has dimension $28$. Consider the splitting type $3,5,7,9$ for the restricted tangent bundle. We have two possible fillings 
\[ \begin{pmatrix}
3 & 5 \\ 7 & 9
\end{pmatrix}
\qquad \text{ and } \qquad 
\begin{pmatrix}
3 & 7 \\ 5 & 9
\end{pmatrix}
\]
Corresponding to the first filling we have non-negative increasing splitting types $(a_1, a_2) = (1,5)$ and $(b_1,b_2) = (2,4)$, and to the second filling we have $(a_1,a_2) = (2,4)$ and $(b_1,b_2) = (1,5)$. Since both the fillings are minimal (see Lemma \ref{lemma24}), the locus of morphisms in $Mor_6(\mathbb{P}^1, G(2,4))$ is the disjoint union of the loci $M(2,4) \cap M'(1,5)$ and $M(1,5) \cap M'(2,4)$. Now the loci $M(1,5)$ and $M'(1,5)$ have codimension $3$ in $Mor_6(\mathbb{P}^1, G(2,4))$ which follows from Proposition \ref{codimensionlemma}. Similarly, the loci $M(2,4)$ and $M'(2,4)$ have codimension $1$ in $Mor_6(\mathbb{P}^1, G(2,4))$. It follows from Corollary \ref{corollary9} that the locus $M(2,4) \cap M'(1,5)$ is nonempty. Similarly, $M(1,5) \cap M'(1,5)$ is also nonempty, and since $M(2,4)$ and $M(1,5)$ are disjoint, the intersection $M(2,4) \cap M'(1,5)$ must be proper subset of $M'(1,5)$. Moreover, since $M(2,4)$ has codimension $1$ in $Mor_6(\mathbb{P}^1, G(2,4))$, the intersection locus $M(2,4) \cap M'(1,5)$ must have codimension $1$ in $M'(1,5)$, and hence, it must have codimension $4$ in $Mor_6(\mathbb{P}^1, G(2,4))$. Similarly, the intersection locus $M(1,5) \cap M'(2,4)$ has codimension $4$ in $Mor_6(\mathbb{P}^1, G(2,4))$. Hence, the locus of degree $6$ morphisms from $\mathbb{P}^1$ to $G(2,4)$ with restricted tangent bundle having splitting type $3,5,7,9$ has codimension $4$ and has atleast two irreducible components arising from the two distinct fillings.

We see that Proposition \ref{proposition22} exhorts us to determine the possible fillings of a  splitting type as a key step towards understanding the locus of morphisms in $Mor_e(\P^1, \G)$ with restricted tangent bundles having the given splitting type. To this end, we have the following Lemmas.
\begin{lem}\label{lemma24}
Let $r = 2 $ and $n=4$, and let $\{ c_1, c_2, c_3, c_4 \}$ be a splitting type of the restricted tangent bundle with $c_1 \leq c_2 < c_3 \leq c_4$. Then $\lbrace c_1, c_2, c_3, c_4 \rbrace$ has two possible fillings 
\[ \begin{pmatrix}
c_1 & c_2 \\ c_3 & c_4
\end{pmatrix} \qquad \text{ and } \qquad 
\begin{pmatrix}
c_1 & c_3 \\
c_2 & c_4
\end{pmatrix} \]
Moreover, both the fillings are minimal.
\end{lem}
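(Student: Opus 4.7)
The plan is to first enumerate all fillings of the splitting type $\{c_1, c_2, c_3, c_4\}$ using the monotonicity and rectangular difference conditions of Definition \ref{filling}, and then verify minimality of each by a direct computation in terms of the spreads of the splitting types $(a_\bullet)$ and $(b_\bullet)$.

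For the classification, I would begin by noting that for any $2 \times 2$ filling $A = (a_{i,j})$, the weak monotonicity along rows and columns makes $a_{1,1}$ the minimum and $a_{2,2}$ the maximum of the four entries. Since the multiset of entries must equal $\{c_1, c_2, c_3, c_4\}$, this immediately forces $a_{1,1} = c_1$ and $a_{2,2} = c_4$. The third condition of Definition \ref{filling}, specialized to the $2 \times 2$ case, reads $a_{2,1} - a_{1,1} = a_{2,2} - a_{1,2}$, i.e., $a_{1,2} + a_{2,1} = c_1 + c_4$. Together with $a_{1,1} + a_{1,2} + a_{2,1} + a_{2,2} = c_1 + c_2 + c_3 + c_4$ (so in particular the implicit identity $c_1 + c_4 = c_2 + c_3$ must hold for any filling to exist) this yields $\{a_{1,2}, a_{2,1}\} = \{c_2, c_3\}$. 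Because $c_1 \leq c_2 < c_3 \leq c_4$, both placements of $c_2$ and $c_3$ on the anti-diagonal respect the remaining monotonicity conditions, producing precisely the two fillings displayed in the statement.

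For minimality of, say, the first filling, let $(a_1, a_2)$ and $(b_1, b_2)$ denote the associated non-negative increasing splitting types read off via Lemma \ref{fillingtosplitting}, and parametrize any competitor $(a'_1, a'_2), (b'_1, b'_2)$ with total $e$ by the spreads $\Delta_{a'} := a'_2 - a'_1$ and $\Delta_{b'} := b'_2 - b'_1$. Since the sum is fixed, $\mathfrak{P}(a'_\bullet) \geq \mathfrak{P}(a_\bullet)$ is equivalent to $\Delta_{a'} \geq \Delta_a$, and similarly for $b$. The minimum entry of the candidate matrix $(a'_i + b'_j)$ is $a'_1 + b'_1 = e - \tfrac{1}{2}(\Delta_{a'} + \Delta_{b'})$, and for this to match the minimum $c_1 = e - \tfrac{1}{2}(\Delta_a + \Delta_b)$ of the prescribed splitting type one would need $\Delta_{a'} + \Delta_{b'} = \Delta_a + \Delta_b$. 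Combined with $\Delta_{a'} \geq \Delta_a$ and $\Delta_{b'} \geq \Delta_b$, this forces equality in both, contradicting the hypothesis that at least one of the polygonal inequalities is strict. The same argument applies verbatim to the second filling.

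The only subtle point I foresee is the implicit use of $c_1 + c_4 = c_2 + c_3$, but this is automatic from the hypothesis that $\{c_l\}$ is the splitting type of an actual restricted tangent bundle and therefore admits at least one filling. Otherwise the $2 \times 2$ combinatorics is sufficiently rigid that the classification is forced, and minimality reduces to a single-line identity about spreads, so no serious obstacle is anticipated.
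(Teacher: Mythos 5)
Your proof is correct and follows essentially the same approach as the paper: the $2\times 2$ rigidity forces the corner entries $a_{1,1}=c_1$ and $a_{2,2}=c_4$ and then pins the anti-diagonal to $\{c_2,c_3\}$, and minimality follows by comparing a fixed corner entry against the polygonal-line inequalities. The paper only sketches this (stating the argument for Lemma~\ref{lemma25} via the $(r,n-r)$ entry and declaring the $r=2$ case analogous), while you flesh out the equivalent min-entry/spread computation explicitly.
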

Similarly, we have 
\begin{lem}\label{lemma25}
Let $r=3$ and $n=5$. A splitting type $\lbrace c_1, \cdots, c_6 \rbrace$ of the restricted tangent bundle with $c_1 \leq \cdots \leq c_6$ has exactly one filling except when $\lbrace c_1, \cdots, c_6 \rbrace = \lbrace c_1, c_1 + \lambda, \cdots, c_1 + 5 \lambda \rbrace$ for some integer $\lambda$ in which case there are two possible fillings 
\[ 
\begin{pmatrix}
c_1 & c_1 + \lambda \\
c_1 + 2 \lambda & c_1 + 3 \lambda \\
c_1 + 4\lambda & c_1 + 5\lambda
\end{pmatrix} 
\qquad \text{ and } \qquad 
\begin{pmatrix}
c_1 & c_1 + 3 \lambda \\
c_1 + \lambda & c_1 + 4\lambda\\
c_1 + 2 \lambda & c_1 + 5 \lambda
\end{pmatrix}
\]
Additionally, all the fillings are minimal.
\end{lem}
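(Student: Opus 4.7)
My plan is to translate fillings into polynomial factorizations and then do casework on the column-gap parameter $\epsilon$.

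\textbf{Reformulation.} For $r=3$, $n-r=2$, a filling of $\{c_1,\ldots,c_6\}$ is specified by non-negative integers $(\delta_1,\delta_2,\epsilon)$ such that the multiset
\[
\{c_1,\, c_1+\epsilon,\, c_1+\delta_1,\, c_1+\delta_1+\epsilon,\, c_1+\delta_1+\delta_2,\, c_1+\delta_1+\delta_2+\epsilon\}
\]
coincides with $\{c_1,\ldots,c_6\}$. Equivalently, writing $g(x) = \sum_\ell x^{c_\ell-c_1}$, a filling is a factorization $g(x) = (1+x^{\delta_1}+x^{\delta_1+\delta_2})(1+x^\epsilon)$ with $\delta_i,\epsilon \geq 0$. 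Polynomial division shows that once $\epsilon$ is fixed the other factor is uniquely determined, so two distinct fillings give distinct $\epsilon$'s; assume WLOG $\epsilon<\epsilon'$. If $\epsilon=0$, the first multiset is $\{c_1,c_1,c_3,c_3,c_5,c_5\}$ and the element $c_5$ would need to pair with $c_5+\epsilon'>c_6$ in the second filling---impossible---so $0<\epsilon<\epsilon'$.

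\textbf{Main casework.} The element $\epsilon$ must appear in the second multiset $\{0,\epsilon',\delta_1',\delta_1'+\epsilon',\delta_1'+\delta_2',\delta_1'+\delta_2'+\epsilon'\}$. Since $0<\epsilon<\epsilon'$, either $\epsilon=\delta_1'$ or $\epsilon=\delta_1'+\delta_2'$. In the latter case, cancelling common elements and equating maxima reduces the problem to $\{\delta_1+\epsilon,\epsilon'\}=\{\epsilon',\delta_1+\epsilon'\}$, whose bijections force $\delta_1=0$ or $\epsilon=\epsilon'$, both contradictions. Hence $\epsilon=\delta_1'$; matching maxima gives $\delta_2'=\delta_1+\delta_2-\epsilon'$, equating sums after cancellation yields $\delta_1=(\epsilon+\epsilon')/2$, and matching the element $\delta_1+\epsilon$ in the remaining part of the second multiset forces $\epsilon'=3\epsilon$. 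Writing $\lambda := \epsilon$, back-substitution gives
\[
(\delta_1,\delta_2,\epsilon) = (2\lambda,2\lambda,\lambda), \qquad (\delta_1',\delta_2',\epsilon') = (\lambda,\lambda,3\lambda),
\]
so $\{c_\ell\}=\{c_1+k\lambda\}_{k=0}^{5}$ and the two fillings are exactly those displayed in the statement.

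\textbf{Minimality.} In the non-AP case, the unique filling is automatically minimal, since any strictly dominating $(a'_\bullet,b'_\bullet)$ with $\{a'_i+b'_j\}=\{c_\ell\}$ would give a second filling. In the AP case, a direct computation using $a_i = \tfrac{1}{2}(\text{row sum}-e)$ and $b_j = \tfrac{1}{3}(\text{col sum}-e)$ shows that the polygonal lines of the two fillings satisfy $\mathfrak P(a^{(1)}_\bullet) > \mathfrak P(a^{(2)}_\bullet)$ in the first two coordinates while $\mathfrak P(b^{(1)}_\bullet) < \mathfrak P(b^{(2)}_\bullet)$ in the first coordinate, so neither filling's pair of polygonal lines dominates the other's and both are minimal. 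The main obstacle in executing this plan is the multiset bookkeeping in the main casework, particularly degeneracies where entries of $\{c_\ell\}$ coincide (e.g.\ $\delta_1=0$ or $\delta_2=\epsilon$); the generating-function framework organizes these cases systematically.
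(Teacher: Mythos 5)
Your proof is essentially correct and arrives at the same classification, but you organize the enumeration through a genuinely different lens than the paper does. The paper normalizes the filling so that the $(1,1)$ entry is $0$, observes that the $(1,1)$ and $(3,2)$ entries are forced, and then runs a bare-hands check over all permutations of the remaining four entries subject to the monotonicity and constant-row/column-gap conditions of Definition~\ref{filling}. You instead encode a filling as the factorization $\sum_\ell x^{c_\ell-c_1}=(1+x^{\delta_1}+x^{\delta_1+\delta_2})(1+x^{\epsilon})$, note that $\epsilon$ determines the other factor uniquely, and reduce to a finite multiset-matching problem in $(\delta_1,\delta_2,\epsilon)$ vs.\ $(\delta_1',\delta_2',\epsilon')$. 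Equating maxima plays the same role in both arguments as the observation that the two corner entries are rigid. The polynomial reformulation is a little more systematic and, in principle, scales better than the paper's brute-force permutation check (a virtue the paper itself admits it lacks for larger $r,n$), at the cost of some multiset bookkeeping that you flag but do not fully carry out. On minimality, the paper directly shows that any $(a'_\bullet,b'_\bullet)$ with both polygonal lines $\geq$ and yielding a filling must equal $(a_\bullet,b_\bullet)$, by pinning $a'_3+b'_2=a_3+b_2$ and unwinding; you instead compute the polygonal lines of the two AP fillings and show they are incomparable, which is weaker but sufficient once you already know there are only two fillings. Both routes are valid.

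A few places where you should tighten the write-up before this could stand on its own. First, the elimination of $\epsilon=0$ is stated in one sentence (\enquote{$c_5$ would need to pair with $c_5+\epsilon'>c_6$}) and glosses over the degenerate possibilities $\delta_2'=0$ or $\delta_1'=0$; a clean version cancels the twice-occurring maximum and forces $\epsilon'=0$ or a multiplicity clash, and it is worth spelling out. Second, in your Case~$\epsilon=\delta_1'+\delta_2'$, after equating maxima and cancelling $\{0,\epsilon,\epsilon',\epsilon+\epsilon'\}$ the residual identity is $\{\delta_1,\delta_1+\epsilon\}=\{\delta_1',\delta_1'+\epsilon'\}$, not $\{\delta_1+\epsilon,\epsilon'\}=\{\epsilon',\delta_1+\epsilon'\}$ as written; the contradiction $\epsilon=\epsilon'$ or $\epsilon+\epsilon'=0$ still falls out, but the displayed reduction is off. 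Third, the final step \enquote{matching the element $\delta_1+\epsilon$\dots forces $\epsilon'=3\epsilon$} compresses a genuine subcase split: after $\delta_1=(\epsilon+\epsilon')/2$ one must match $\delta_1+\delta_2$ against $\epsilon'$ or $\epsilon+\epsilon'$, discard the first, and only then extract $\epsilon'=3\epsilon$ (and observe, for consistency, that $\epsilon+\epsilon'=4\epsilon$ is automatically even). None of these is a fatal gap, but they are exactly the multiset degeneracies you yourself identify as \enquote{the main obstacle}, and they need to be carried through to make the argument complete.
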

\begin{proof}[Proof of Lemma \ref{lemma24} and \ref{lemma25}]
We will briefly sketch the proof of Lemma \ref{lemma25}. One can prove Lemma \ref{lemma24} in a similar fashion. 

Let $r = 3$ and $n=5$. Given a splitting type $\{c_1, \cdots, c_6\}$ of a restricted tangent bundle $\varphi^*(T_{\G})$, there is at least one filling (since $\varphi^*(T_{\G}) = \varphi^*(\mathcal{S}^*) \otimes \varphi^*(\mathcal{Q})$), say $A$, which is a $3 \times 2$ matrix. After subtracting the $(1,1)$th entry from every other entry of $A$, we get a new matrix of form 
\[ \begin{pmatrix} 0 & \lambda \\ \rho_2 & \rho_2 + \lambda \\ \rho_3 & \rho_3 + \lambda \end{pmatrix} \]
for some non-negative integers $\lambda, \, \rho_2, \, \rho_3$ with $\rho_2 \leq \rho_3$. We now look at every possible permutations with the $(1,1)$th entry being zero and the $(3,2)$th entry being $\rho_2 + \lambda$, and force the conditions of definition \ref{filling} which gives us some equations which must be compatible. This gives us all the possibilities. A similar brute force method works for $r=2$ and $n=4$.

The proof of minimality of the fillings in the cases $r=2,\,n=4$ and $r=3,\,n=5$ are in a similar flavor. The key idea is to use the fact that the $(1,1)$th and $(r,n-r)$th entries are the same for every possible filling. For instance, when $r=3,\,n=5$, let $\lbrace c_1, \cdots, c_6 \rbrace = \lbrace c_1, c_1 + \lambda, \cdots , c_1 + 5 \lambda \rbrace$ and let $\lbrace a_\bullet \rbrace$ and $\lbrace b_\bullet \rbrace$ be the corresponding induced splittings. For any $\lbrace a'_\bullet \rbrace$ with $\mathfrak{P}(a'_\bullet) \geq \mathfrak{P}(a_\bullet)$ and $\lbrace b'_\bullet \rbrace$ with $\mathfrak{P}(b'_\bullet) \geq \mathfrak{P}(b_\bullet)$, we must have $a'_3 + b'_2 = a_3 + b_2$ and since $a'_3 \geq a_3$ and $b'_2 \geq b_2$, we get $a'_3 = a_3, \, b'_2 = b_2$. This gives $b'_1 = b_1$ and $a'_2 + a'_1 = a_2 + a_1 $. Since we must have $a'_1 + b'_1 = a_1 + b_1$, we get $a'_i = a_i$ for all $i = 1,\,2,\,3$ and $b'_j = b_j$ for all $j = 1,\,2$.
\end{proof}
Using a similar method as in proof of Lemma \ref{lemma25}, we deduce that when $r=4$ and $n=6$, a splitting type of the restricted tangent bundle of the form $\{ c_1, c_1, c_2, c_2, c_3, c_3, c_4,c_4 \}$ with $0 \leq c_2 - c_1 = c_3 - c_2  = c_4 - c_3 $ has three possible fillings 
\[ \begin{pmatrix}
c_1 & c_1 \\ c_2 & c_2 \\ c_3 & c_3 \\ c_4 & c_4 
\end{pmatrix}
\, , \, 
\begin{pmatrix}
c_1 & c_2 \\ c_1 & c_2 \\ c_3 & c_4 \\ c_3 & c_4 
\end{pmatrix}
\, \text{ and } \,
\begin{pmatrix}
c_1 & c_3 \\ c_1 & c_3 \\ c_2 & c_4 \\ c_2 & c_4
\end{pmatrix}
\]
However, in general, we found it impossible to determine all possible fillings using this brute force method. 

Additionally, we observe from these special cases that the number of fillings seems to increase as we increase $r,n$ and $e$. We don't know how the fillings of a given splitting type depend on $r$ and $n$, but we can provide a very crude upper bound for the number of possible fillings. 
\begin{lem}
The total number of distinct fillings of a splitting type $\{ c_l \}_{1 \leq l \leq r(n-r)}$ of the restricted tangent bundle is bounded above by $\binom{r(n-r)-2}{n-r-1}$.
\end{lem}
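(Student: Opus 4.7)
The plan is to show that every filling is determined by its first row, and that the first row admits a label representation inside $\{1,\ldots,r(n-r)-1\}$ containing the label $1$; counting such representations then yields the bound $\binom{r(n-r)-2}{n-r-1}$.

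First I would observe that the additive structure $a_{i,j} = a_i + b_j$ forces the first row to be $(a_1+b_1,\ldots,a_1+b_{n-r})$. This already determines each difference $b_j - b_1$, and together with the normalization $\sum_j b_j = e$ it pins down $a_1 = \bigl(\sum_j (a_1+b_j) - e\bigr)/(n-r)$, hence the entire sequence $b_\bullet$. The remaining entries $a_2,\ldots,a_r$ are then recovered from the residual multiset $\{c_l\} \setminus (\text{first row})$ by repeatedly taking the minimum and subtracting $b_1$. So the map sending a filling to its first row is injective.

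Next I would show that the first row, viewed as a sub-multiset of $\{c_l\}$, can always be represented by a label subset of $\{1,\ldots,r(n-r)-1\}$ of size $n-r$ containing label $1$ (which is reserved for the forced entry $c_1 = a_1 + b_1$). If the value $c_{r(n-r)}$ does not appear in the first row this is automatic. Otherwise, writing $a_1 + b_j = c_{r(n-r)} = a_r + b_{n-r}$ for some $j$ and using $a_1 \leq a_r$ together with $b_j \leq b_{n-r}$ forces $a_1 = a_r$ and $b_j = b_{n-r}$; in particular all rows of the matrix coincide, so if $c_{r(n-r)}$ occurs $k$ times in the first row it must occur $rk$ times in $\{c_l\}$. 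Since $rk - 1 \geq k$ for $r \geq 2$, there are enough labels strictly below $r(n-r)$ to accommodate every first-row occurrence of $c_{r(n-r)}$.

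Combining the two steps, each filling determines an injective assignment into subsets of $\{1,\ldots,r(n-r)-1\}$ of size $n-r$ containing $1$, and there are exactly $\binom{r(n-r)-2}{n-r-1}$ such subsets. The most delicate point is the multiplicity estimate in the degenerate case when $c_{r(n-r)}$ does appear in the first row; once that case is handled the rest is essentially bookkeeping.
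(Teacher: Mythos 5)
Your argument follows essentially the same route as the paper: fix the $(1,1)$ and $(r,n-r)$ entries, observe that a filling is determined by its first row, and count choices of labels for $(1,2),\ldots,(1,n-r)$ within $\{2,\ldots,r(n-r)-1\}$, giving $\binom{r(n-r)-2}{n-r-1}$. The paper asserts this count without comment; your careful check that label $r(n-r)$ can always be avoided in the first row even when the value $c_{r(n-r)}$ appears there (using that in that degenerate case all rows coincide, so its multiplicity is $rk$ and $rk-1\geq k$ for $r\geq 2$) is genuine rigor that the paper's terse proof glosses over.
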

\begin{proof}
It follows from definition \ref{filling} that every filling must have the same $(1,1)$th and $(r,n-r)$th entry. Furthermore, we see that every filling is uniquely determined by the entries $(1,2),$ $\cdots,$ $(1,n-r)$. Hence, a clumsy upper bound for the total number of fillings is the number of choices for these entries, which is $\binom{r(n-r) -2}{n-r-1}$.
\end{proof}

On a more positive note, we see that 
\begin{lem}
If the splitting type of the restricted tangent bundle $\varphi^*(T_{\G})$ is balanced, then the splitting type of the restricted universal sub-bundle $\varphi^*(\mathcal{S})$ and the splitting type of the restricted universal quotient bundle $\varphi^*(\mathcal{Q})$ must be balanced.
\end{lem}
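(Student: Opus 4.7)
The plan is to exploit the tensor-product structure $\varphi^*(T_{\G}) = \varphi^*(\S^*) \otimes \varphi^*(\mathcal{Q})$ and compare the extremal summands on each side. Write $\varphi^*(\S^*) \cong \oplus_{i=1}^r \mathcal{O}_{\P^1}(a_i)$ with $0 \leq a_1 \leq \cdots \leq a_r$ and $\varphi^*(\mathcal{Q}) \cong \oplus_{j=1}^{n-r} \mathcal{O}_{\P^1}(b_j)$ with $0 \leq b_1 \leq \cdots \leq b_{n-r}$; then $\varphi^*(T_{\G}) \cong \oplus_{i,j} \mathcal{O}_{\P^1}(a_i + b_j)$. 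The largest twist appearing is $a_r + b_{n-r}$ and the smallest is $a_1 + b_1$, so their difference is exactly
\[ (a_r + b_{n-r}) - (a_1 + b_1) = (a_r - a_1) + (b_{n-r} - b_1). \]

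First I would record that the hypothesis that $\varphi^*(T_{\G})$ is balanced means, by definition, that every two twists differ by at most $1$; in particular the maximal and minimal twist must differ by at most $1$. Thus the displayed sum of two non-negative integers is at most $1$, so each of $a_r - a_1$ and $b_{n-r} - b_1$ is at most $1$. Since $a_r - a_1 \geq a_j - a_i$ for all $i,j$, this shows $|a_i - a_j| \leq 1$ for all $i,j$, i.e.\ $\varphi^*(\S)$ is balanced; the analogous inequality shows $\varphi^*(\mathcal{Q})$ is balanced.

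There is no real obstacle here — the entire argument is the observation that the extremal gap of a tensor product is the sum of the extremal gaps of the factors, combined with non-negativity of the gaps. The only thing to double-check is that one may assume $a_i, b_j \geq 0$ (guaranteed by the conventions fixed in Section \ref{prelim}), so the sum of the gaps cannot cancel.
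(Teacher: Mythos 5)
Your proof is correct and follows essentially the same approach as the paper: both identify the extremal twists of $\varphi^*(\S^*)\otimes\varphi^*(\mathcal{Q})$ as $a_1+b_1$ and $a_r+b_{n-r}$, observe that balancedness forces $(a_r-a_1)+(b_{n-r}-b_1)\leq 1$, and conclude by non-negativity of each summand. The paper phrases this via its notion of a filling, but the underlying argument is identical.
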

\begin{proof}
Let us choose a filling for the splitting type of the restricted tangent bundle, and let $a_1, \cdots, a_r$ and $b_1, \cdots, b_{n-r}$ be non-negative increasing splitting types of $\varphi^*(\mathcal{S}^*)$ and $\varphi^*(\mathcal{Q})$ respectively. Since the splitting type of the restricted tangent bundle is balanced, we must have $(a_r + b_{n-r}) - (a_1 + b_1) \leq 1$, which yields $a_r - a_1 \leq 1$ and $b_{n-r} - b_1 \leq 1$. Hence, the splitting types of $\varphi^*(\mathcal{S})$ and $\varphi^*(\mathcal{Q})$ must be balanced.
\end{proof}

In conclusion, the locus of morphisms in $Mor_e(\P^1, \G)$ need not always be irreducible. For example, when $r =2 $ and $n=4$, and let $c_1, c_2 , c_3, c_4$ be non-negative increasing splitting type of the restricted universal tangent bundle, with $c_2 < c_3$. It follows from   Lemma \ref{lemma24} that this locus has at least two irreducible components. 

{\small
\renewcommand{\refname}{\textsc{References}}

{\footnotesize\textsc{Department of Mathematics, Statistics and CS, University of Illinois at Chicago, Chicago, IL, 60607.}\\
\textit{E-mail address}: \texttt{smanda9@uic.edu}}}
\end{document}